\newtheorem{theorem}{Theorem}[section]
\newtheorem{proposition}[theorem]{Proposition}
\newtheorem{definition}[theorem]{Definition} 
\newtheorem{lemma}[theorem]{Lemma} 
\theoremstyle{definition} 
\newtheorem{remark}[theorem]{Remark}
\numberwithin{equation}{section} 
\newcommand\x[1]{\mathrm{#1}}
\renewcommand\o[1]{\overline{#1}}
\newcommand\h[1]{\hat{#1}}
\renewcommand\r[1]{\mathring{#1}}
\renewcommand\t[1]{\tilde{#1}}
\newcommand\f[2]{\frac{#1}{#2}}
\newcommand\I{\int\limits}
\renewcommand\({\Big(}
\renewcommand\){\Big)}
\renewcommand\]{\Big]}
\renewcommand\>{\rangle}
\renewcommand\S{\sum\limits}
\renewcommand\P{\prod\limits}
\newcommand\ba[2]{\begin{pmatrix}{#1}\\{#2}\end{pmatrix}}
\newcommand\ab[2]{\begin{pmatrix}{#1}&{#2}\end{pmatrix}}
\newcommand\bb[4]{\begin{pmatrix}{#1}&{#2}\\{#3}&{#4}\end{pmatrix}}
\newcommand\F[1]{\sqrt{#1}}
\newcommand\mL{\mathfrak m}
\newcommand\AL{\mathcal A}
\newcommand\BL{\mathcal B}
\newcommand\EL{\mathcal E}
\newcommand\IL{\mathcal I}
\newcommand\KL{\mathcal K}
\newcommand\LL{\mathcal L}
\newcommand\ML{\mathcal M}
\newcommand\PL{\mathcal P}
\newcommand\Bl{\mathbf B}
\newcommand\Cl{\mathbf C}
\newcommand\Dl{\mathbf D}
\newcommand\Nl{\mathbf N}
\newcommand\Ol{\mathbf O}
\newcommand\Pl{\mathbf P}
\newcommand\la{\alpha}
\newcommand\lb{\beta}
\renewcommand\lg{\gamma}
\newcommand\lG{\Gamma}
\newcommand\lD{\Delta}
\newcommand\li{\iota}
\newcommand\lk{\kappa}
\newcommand\lf{\phi}
\newcommand\lF{\Phi}
\renewcommand\ll{\lambda}
\newcommand\lL{\Lambda}
\newcommand\lm{\mu}
\renewcommand\ln{\nu}
\newcommand\lO{\Omega}
\newcommand\lp{\pi}
\newcommand\lr{\rho}
\renewcommand\lq{\psi}
\newcommand\lQ{\Psi}
\newcommand\ls{\sigma}
\newcommand\Lt{\tau}
\newcommand\lt{\vartheta}
\newcommand\lx{\xi}
\newcommand\lz{\zeta}
\newcommand\m{{\boldsymbol m}}
\renewcommand\k{{\boldsymbol k}}
\newcommand\n{{\boldsymbol n}}
\newcommand\je{{\boldsymbol 1}}
\renewcommand\.{\cdot}
\newcommand\dl{\partial}
\newcommand\oo{\infty}
\newcommand\oc{\circ}
\newcommand\op{\oplus}
\newcommand\xx{\times}
\newcommand\xt{\otimes}
\newcommand\ic{\subset}
\newcommand\ui{\cap}
\newcommand\yi{\wedge}
\renewcommand\l{\ell}
\renewcommand\:{\cdots}
\renewcommand\;{\ldots}
\newcommand\wt{\widetilde}
\newcommand\er{\eqref} 
\newcommand\zl{\boldsymbol z}
\newcommand\be[1]{\begin{equation}\label{#1}}
\newcommand\ee{\end{equation}}
\newcommand\bi{\begin{itemize}}
\newcommand\ei{\end{itemize}}
\renewcommand\i[1]{\item{#1}}
\begin{document}


\title[Singular Hilbert modules]{Singular Hilbert modules on Jordan-Kepler varieties}
\author[G. Misra, H. Upmeier]{Gadadhar Misra {\rm and} Harald Upmeier} 
\medskip

\address{Department of Mathematics, Indian Institute of Science, Bangalore 560012, India}
\email{gm@iisc.ac.in}

\address{Fachbereich Mathematik, Universit\"at Marburg, D-35032 Marburg, Germany}
\email{upmeier{@}mathematik.uni-marburg.de}

\thanks{The first-named author was supported by the J C Bose National Fellowship of the DST and CAS II of the UGC and the second-named author was supported by an Infosys Visiting Chair Professorship at the Indian Institute of Science}
\medskip

\subjclass{Primary 32M15, 46E22; Secondary 14M12, 17C36, 47B35}

\keywords{analytic Hilbert module, algebraic variety, symmetric domain, reproducing kernel, curvature, rigidity}

\begin{abstract} We study submodules of analytic Hilbert modules defined over certain algebraic varieties in bounded symmetric domains, the so-called Jordan-Kepler varieties $V_\l$ of arbitrary rank $\l.$ For $\l>1$ the singular set of $V_\l$ is not a complete intersection. Hence the usual monoidal transformations do not suffice for the resolution of the singularities. Instead, we describe a new higher rank version of the blow-up process, defined in terms of Jordan algebraic determinants, and apply this resolution to obtain the rigidity of the submodules vanishing on the singular set.
\end{abstract}

\maketitle

\setcounter{section}{-1}

\section{Introduction}
R. G. Douglas introduced the notion of Hilbert module $\ML$ over a function algebra $\AL$ and reformulated several questions of multi-variable operator theory in the language of Hilbert modules. Having done this, it is possible to use techniques from commutative algebra and algebraic geometry to answer some of these questions. One of the very interesting examples is the proof of the Rigidity Theorem for Hilbert modules \cite[Section 3]{DPSY}, which we discuss below. 

A Hilbert module is a complex separable Hilbert space $\ML$ equipped with a multiplication
$$\mL:\AL\to\BL(\ML),\,\, \mL_p(f)=p\.f,\,\, f\in\ML,\, p\in\AL,$$ 
which is a continuous algebra homomorphism. Here $\BL(\ML)$ denotes the algebra of all bounded linear operators on $\ML$. The continuity of the module multiplication means
$$\|\mL_p f\| \le C\, \|f\|, f\in\ML,  p\in\AL$$
for some $C>0$. Familiar examples are the Hardy and Bergman spaces defined on bounded  domains in $\Cl^d$. Sometimes, it is convenient to consider the module multiplication over the polynomial ring $\Cl[\zl]$ in $d$ variables rather than a function algebra. In this case, we require that 
$$\|\mL_p f\| \le C_p\, \|f\|, f\in\ML,  p\in\AL$$
for some $C_p>0.$ We make this ``weak'' continuity assumption through out the paper.  

In  what follows, we will consider a natural class of Hilbert modules consisting of holomorphic functions, taking values in $\Cl^n$,  defined on a bounded domain $\lO\subseteq\Cl^d$. Thus \textsf{(i)} we assume $\ML\subseteq\x{Hol}(\lO,\Cl^n)$. A second assumption \textsf{(ii)} is to require that the evaluation functional 
$$\mbox{\rm ev}_z:\ML\to\Cl^n,\quad\mbox{ev}_z(f):=f(z),$$ 
is continuous and surjective, see \cite[Definition 2.5]{AS}. Set 
$$\KL(z,w):=\mbox{\rm ev}_z\mbox{\rm ev}^*_w:\lO\xx\lO\to\Cl^{n\xx n}.$$
The function $\KL,$ which is holomorphic in the first variable and anti-holomorphic in the second variable is called the 
{\bf reproducing kernel} of the Hilbert module $\ML.$ A further assumption \textsf{(iii)} is that $\Cl[\zl]\subseteq\ML$ is dense in 
$\ML$. A Hilbert module with these properties is said to be an {\bf analytic Hilbert module}. In this paper, we study a class of Hilbert modules which are submodules of analytic Hilbert modules.

From the closed graph theorem, it follows that $\mL_p f\in\ML$ for any $f\in\ML$ and $p\in\Cl[\zl]$. Also, the density of the polynomials implies that the eigenspace $\ker\,(\mL_p-p(w))^*$ is spanned by the vectors 
$$\KL_w(\.)\lz:=\KL(\.,w)\lz$$ 
for $\lz\in\Cl^n$, i.e.,
$$\ker\ (\mL_p-p(w))^*=\x{Ran}\ \KL_w,$$ 
see \cite[Remark, p. 285]{DM}. Since the metric $\KL(w,w)$ is invertible by our assumption, it follows that the dimension of the kernel $\{\KL_w(\.)\lz:\lz\in\Cl^n\}$ is exactly $n$ for all $w\in\lO$. Clearly, the map $w\mapsto\KL_w(\.)\lz$, $\lz\in\Cl^n$ is a holomorphic map on $\lO^*:=\{w\in\Cl^d:\o w\in\lO\}$. It serves as a holomorphic section of the trivial vector bundle 
$$\EL:=\{(w,v):\ w\in\lO^*,\, v\in\ker\ (\mL_p-p(w))^*\}\subseteq\lO^*\xx\ML$$
with fibre
$$\EL_w=\ker\ (\mL_p-p(w))^*=\x{Ran}\ \KL_w, \,w\in\lO^*.$$
A refinement of the argument given in \cite{AS} (which, in turn, is an adaptation of ideas from \cite{CD}), then shows that the isomorphism class of the module $\ML$ and the equivalence class of the holomorphic Hermitian bundle $\EL$ determine each other. The case $d=1,$ originally considered in \cite{CD}, corresponds to Hilbert modules over the polynomial ring in one variable. The proof in \cite{CD}, in this particular case, has a slightly different set of hypotheses. In the paper \cite{CD}, among other things, a complete set of invariants for the equivalence class of $\EL$ is given. If $n=1$, as is well known, this is just the curvature of the holomorphic line bundle $\EL$.

There is a natural notion of module isomorphism, namely, the existence of a unitary {\color{blue} linear} map $U:\ML\to\wt\ML$, which intertwines the module multiplications $\mL_p$ and $\t\mL_p$, that is, 
$$U\mL_p=\t\mL_p U.$$
Clearly, a Hilbert module $\ML$ over the polynomial ring $\Cl[\zl]$ is determined by the commuting tuple of multiplication by the coordinate functions on $\ML$ and vice-versa. Thus the notion of module isomorphism corresponds to the usual notion of unitary equivalence of two such $d$-tuples of multiplication operators by a fixed unitary. If $\lG:\ML_1\to\ML_2$ is a module map, then it maps the eigenspace of $\ML_1$ at $w$ into that of $\ML_2$ at $w$. Thus $\lG(\KL^1(\.,w)\lz)\subseteq\{\KL^2(z,w)\lx:\lx\in\Cl^n\}$, where 
$\KL^i$ are the reproducing kernels of the Hilbert modules $\ML_i$, i$=1,2,$ respectively. Hence we obtain a holomorphic map 
$\lF_\lG:\lO\to\Cl^{n\xx n}$ with the property 
$$\lG\KL^1(z,w)=\lF_\lG(w)^*\KL^2(z,w)$$
for any fixed but arbitrary $w.$ Thus any module map between two analytic Hilbert modules is induced by a holomorphic matrix-valued function $\lF:\lO\to\Cl^{n\xx n}$, see \cite[Theorem 3.7]{CS}. Moreover, if the module map is invertible, then $\lF_\lG(z)$ must be invertible. Finally, if the module map is assumed to be unitary, then 
$$\KL^1(z,w)=\lF_\lG(z)\ \KL^2(z,w)\ \lF_\lG^*(w)$$
for all $z,w\in\lO.$  

Let us describe an instance of the Sz.-Nagy -- Foias theory in the language of Hilbert modules following \cite{DP}. Let $T$ be a contraction on some Hilbert space $\ML$. The module multiplication determined by this operator is the map 
$\mL_p(f)=p(T)f$, $p\in\Cl[z]$, $f\in\ML$. From the contractivity of $T$, it follows that $\|\mL_p\|\le\|f\|$ and in this case,  the Hilbert module $\ML$ is said to be contractive. Now, assume that ${T^*}^n\to 0$ as $n\to\oo$. Then Sz.-Nagy -- Foias show that there exists an isometry $R$ and a co-isometry $R^\prime$ such that, for the unit disk $\Dl,$ the sequence
\begin{eqnarray*}\label{e1}\xymatrix{0\ar[r]& H_\EL^2(\Dl)\ar[r]^{R}&H_{\EL^\prime}^2(\Dl)\ar[r]^{R^\prime}&\ML\ar[r]&0},\end{eqnarray*}
where $\EL$ and $\EL^\prime$ are a pair of (not necessarily finite dimensional) Hilbert spaces, is exact. The map $R$ is essentially the characteristic function of the contraction $T$ and serves to identify the contractive module $\ML$ as a quotient module of 
$H_{\EL^{\prime}}^2(\Dl)$ by the image of $H_{\EL}^2(\Dl)$ under the isometric map $R$. 

For any planar domain $\lO,$ a model theory for completely contractive Hilbert modules over the function algebra $\x{Rat}(\lO),$ consisting of rational functions with poles off the closure $\o\lO$, has been developed by Abrahamse and Douglas in the paper \cite{AD}. However, the situation is much more complicated for Hilbert modules over the polynomial ring in $d$ variables, $d>1$.

\subsection{The normalized kernel} 
We begin by recalling some notions from complex geometry. Let $\LL$ be a holomorphic Hermitian line bundle over a complex manifold 
$\lO$. The Hermitian metric of $\LL$ is given by some smooth choice of an inner product $\|\.\|^2_w$ on the fibre $\LL_w$. There is a canonical (Chern) connection on $\LL$ which is compatible with both the Hermitian metric and the complex structure of $\LL$. The curvature $\lk$ of the line bundle $\LL$ on any fixed but arbitrary coordinate chart, with respect to the canonical connection, is given by the formula 
$$\lk(w):=-\dl\o\dl\log\|\lg(w)\|^2=-\S_{i,j}\dl_i\o\dl_j\log\|\lg(w)\|^2 dw_i\yi d\o w_j,$$
where $\lg$ is any non-vanishing holomorphic section of $\LL.$ Since any two such sections differ by multiplication by a non-vanishing holomorphic function, it is clear that the definition of the curvature is independent of the choice of the holomorphic section $\lg$. Indeed, it is well known that two such line bundles are locally equivalent if and only if their curvatures are equal. For holomorphic Hermitian vector bundles (rank $> 1$) the local equivalence involves not only the curvature but also its covariant derivatives, see \cite{CD}.  

In general, Lemma 2.3 of \cite{W} singles out a frame $\lg^{(0)}$ such that the metric has the form:  
$$\|\lg^{(0)}(w)\|^2=I+O(|w|^2)$$ 
and it follows that 
$$\lk(0)=\S_{i,j}\big(\dl_i\o\dl_j\|\lg^{(0)}(w)\|^2\big)_{|w=0}dw_i\yi d\o w_j.$$ 
In a slightly different language, a {\bf normalized kernel} $\KL^{(0)}$ at $w_0$ is defined in \cite[Remark 4.7(b)]{CS} by requiring that $\KL^{(0)}(z,w_0)\equiv I$. Setting $\lg^{(0)}(w)=\KL_w^{(0)}$, we see that the normalized kernel $\KL^{(0)}$ has no linear terms. 
Fix $w_0\in\lO$. There is a neighborhood, say $\lO_0$, of $w_0$ on which $\KL(z,w_0)$ doesn't vanish (for $n=1$) or is an invertible
$n\xx n$-matrix (for $n>1$). Set
$$\lF_\lG^{(0)}(z)=\KL(w_0,w_0)^{1/2}\ \KL(z,w_0)^{-1},\,\, z\in\lO_0.$$ 
Then 
$$\KL^{(0)}(z,w):=\lF_\lG^{(0)}(z)\ \KL(z,w)\ \lF_\lG^{(0)}(w)^*$$ 
is a normalized kernel on $\lO_0$. Thus starting with an analytic Hilbert module $\ML$ possessing a reproducing kernel $\KL$, there is a Hilbert module $\ML^{(0)}$ possessing a normalized reproducing kernel $\KL^{(0)}$, isomorphic to $\ML$.  Now, it is evident that two Hilbert modules are isomorphic if and only if there is a unitary $U$ such that 
$$\KL^{(0)}_1(z,w)=U\ \KL^{(0)}_2(z,w)\ U^*.$$
In other words, the normalized kernel is uniquely determined up to a fixed unitary. In particular, if $n=1$, then the two Hilbert modules are isomorphic if and only if the normalized kernels are equal. We gather all this information in the following proposition.
 
\begin{proposition}\label{a} The following conditions on any pair of (scalar) analytic Hilbert modules over the polynomial ring are equivalent.
\begin{enumerate}
\item Two analytic Hilbert modules $\ML_1$ and $\ML_2$ are isomorphic.
\item The holomorphic line bundles $\LL_1$ and $\LL_2$ determined by the eigenspaces of the analytic Hilbert modules $\ML_1$ and 
$\ML_2$, respectively, are locally equivalent as Hermitian holomorphic bundles. 
\item The curvature of the two line bundles $\LL_i$, $i=1,2$, are equal.
\item The normalized kernels $\KL_i^{(0)}$, $i=1,2,$ at any fixed but arbitrary point $w_0$ are equal.
\end{enumerate}
\end{proposition}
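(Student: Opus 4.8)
The plan is to prove the cycle $(1)\Rightarrow(2)\Rightarrow(3)\Rightarrow(4)\Rightarrow(1)$, since each condition is already tied to the kernel/bundle dictionary assembled in the preceding discussion, and only one link carries genuine content.

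For $(1)\Rightarrow(2)$ I would start from a unitary module isomorphism $U$, invoke the fact recalled above that it is implemented by a nowhere-vanishing holomorphic $\Phi$ with $\mathcal{K}^1(z,w)=\Phi(z)\mathcal{K}^2(z,w)\overline{\Phi(w)}$, and restrict to the diagonal to obtain $\|\gamma^1(w)\|^2=|\Phi(w)|^2\|\gamma^2(w)\|^2$; this exhibits an isometry of the line bundles $\mathcal{L}_1,\mathcal{L}_2$, so they are locally (indeed globally) equivalent. For $(2)\Rightarrow(3)$ I would use the invariance of curvature under a holomorphic change of frame recalled in the normalized-kernel discussion above: if $\|\gamma^1\|^2=|\Phi|^2\|\gamma^2\|^2$ locally, then $\log\|\gamma^1\|^2-\log\|\gamma^2\|^2=\log\Phi+\log\overline{\Phi}$ is pluriharmonic and killed by $\partial\overline{\partial}$, so $\kappa_1=\kappa_2$. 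For $(4)\Rightarrow(1)$ I would simply read off a nowhere-vanishing holomorphic intertwiner from the defining relations $\mathcal{K}^{(0)}_i=\Phi^{(0)}_i\mathcal{K}_i(\Phi^{(0)}_i)^*$, which is the scalar normalized-kernel criterion already noted.

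The one step with real content is $(3)\Rightarrow(4)$, and here is how I would run it. Passing to the normalized kernels $\mathcal{K}^{(0)}_1,\mathcal{K}^{(0)}_2$ at $w_0$ (which carry the curvatures of $\mathcal{L}_1,\mathcal{L}_2$, since normalization only multiplies the section by the modulus of a holomorphic function), equality of curvatures says that $g(w):=\log\mathcal{K}^{(0)}_1(w,w)-\log\mathcal{K}^{(0)}_2(w,w)$ is pluriharmonic; being real and vanishing at $w_0$, I would write $g=h+\overline{h}$ with $h$ holomorphic and $h(w_0)=0$. Since a sesquianalytic function is determined by its restriction to the totally real diagonal, the diagonal identity $g(w)=h(w)+\overline{h(w)}$ polarizes to $\log\mathcal{K}^{(0)}_1(z,w)-\log\mathcal{K}^{(0)}_2(z,w)=h(z)+\overline{h(w)}$; setting $w=w_0$ and invoking the normalization $\mathcal{K}^{(0)}_i(z,w_0)\equiv1$ forces $h\equiv0$, whence $\mathcal{K}^{(0)}_1=\mathcal{K}^{(0)}_2$.

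The hard part is exactly this $(3)\Rightarrow(4)$: curvature is a purely infinitesimal invariant whereas isomorphism is global, and equal curvature pins down the metric only up to the pluriharmonic ambiguity $g\mapsto g+h+\overline{h}$. The crux I expect to have to justify carefully is that normalization at $w_0$ removes precisely this freedom, which is what the polarization-plus-evaluation trick accomplishes. This is special to line bundles: for $n>1$ the curvature no longer determines the bundle, its covariant derivatives must be adjoined, and that is why the proposition is confined to the scalar case.
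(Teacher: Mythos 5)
Your proof is correct and follows essentially the same route as the paper, which offers no formal proof but simply ``gathers'' the facts established in the preceding discussion: the rule $\KL^1(z,w)=\lF_\lG(z)\,\KL^2(z,w)\,\lF_\lG^*(w)$ for unitary module maps, the uniqueness of the normalized kernel, and the Cowen--Douglas fact that curvature is a complete local invariant for Hermitian holomorphic line bundles. Your explicit pluriharmonicity-plus-polarization argument for $(3)\Rightarrow(4)$ correctly supplies the one step with real content, which the paper subsumes under the remark that ``it is well known that two such line bundles are locally equivalent if and only if their curvatures are equal.''
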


\section{Invariants for submodules}\label{b}
In the paper \cite{CD1}, Cowen and Douglas pointed out that all submodules of the Hardy module $H^2(\Dl)$ are isomorphic. They used this observation to give a new proof of Beurling's theorem describing all invariant subspaces of $H^2(\Dl)$. Although all submodules of the Hardy module $H^2(\Dl)$ are isomorphic, the quotient modules are not. Surprisingly enough, this phenomenon distinguishes the multi-variable situation from the one variable case. Consider for instance the submodule $H_{(0,0)}^2(\Dl^2)$ of all functions vanishing at 
$(0,0)$ in the Hardy space $H^2(\Dl^2)$ over the bidisk $\Dl^2$. Then the module tensor product of $H_{(0,0)}^2(\Dl^2)$ over the polynomial ring $\Cl[\zl]$ in two variables with the one dimensional module $\Cl_w$, $(p,w)\mapsto p(w)$, is easily seen to be 
\be{8}H_{(0,0)}^2(\Dl^2)\xt_{\Cl[\zl]}\Cl_w=\begin{cases}\Cl\op\Cl&\mbox{if~}w=(0,0)\\\Cl &\mbox{if~}w\ne(0,0)\end{cases}\ee
while $H^2(\Dl^2)\xt_{\Cl[\zl]}\Cl_w=\Cl$. It follows that the submodule $H_{(0,0)}^2(\Dl^2)$ is not isomorphic to the module 
$H^2(\Dl^2),$ in stark contrast to the case of one variable. 

The existence of non-isomorphic submodules of the Hardy module $H^2(\Dl^2)$ indicates that inner functions alone may not suffice to  characterize submodules in this case. It is therefore important to determine when two submodules of the Hardy module, and also more general analytic Hilbert modules, are isomorphic. This question was considered in \cite{BCL} for the closure of some ideals 
$\IL\subseteq\Cl[\zl]$ in the Hardy module $H^2(\Dl^2)$ with the common zero set $\{(0,0)\}$. It was extended to a much larger class of ideals in the paper \cite{ACD}. A systematic study in a general setting culminated in the paper \cite{DPSY} describing a 
{\bf rigidity phenomenon} for submodules of analytic Hilbert modules in more than one variable. A different proof of the Rigidity Theorem using the sheaf model was given in \cite{SBisM}. A slightly different approach to obtaining invariants by resolving the singularity at $(0,0)$ was initiated in \cite{DMV}, and considerably expanded in \cite{SBisM}. We describe this approach briefly. 

A systematic study of Hilbert submodules of analytic Hilbert modules was initiated in the papers \cite{BMP,SBisM}. If $\IL$ is an ideal in $\Cl[\zl]$, consider the submodule $\wt\ML=[\IL]$ in an analytic Hilbert module $\ML\subseteq\x{Hol}(\lO,\Cl)$ obtained by taking the closure of $\IL.$ Let
$$\lO_\IL:=\{z\in\lO:\ f(z)=0\ \forall\ f\in\IL\}$$
denote the algebraic subvariety of $\lO$ determined by $\IL.$ For the reproducing kernel $\KL(z,w)$ of $\ML,$ the vectors $\KL_w\in\ML$ will in general not belong to the submodule $\wt\ML.$ However, one has a {\bf truncated kernel} $\wt\KL(z,w)=\wt\KL_w(z)$ such that 
$\wt\KL_w\in\wt\ML$ for all $w\in\lO,$ which induces a holomorphic Hermitian line bundle $\t\LL$ defined on $\lO\setminus\lO_\IL,$ with fibre
$$\t\LL_w=\x{Ran}\ \wt\KL_w,\ w\in\lO\setminus\lO_\IL,$$
and positive definite metric $\wt\KL(w,w).$ This line bundle $\t\LL$ does not necessarily extend to all of $\lO.$ In fact, on the singular set $\lO_\IL$ the eigenspace of the submodule $\wt\ML$ will in general be higher dimensional. However, in the paper \cite{SBisM}, using the monoidal transform, a line bundle $\h\LL$ was constructed on a certain blow-up space $\h\lO,$ with a holomorphic map $\lp:\h\lO\to\lO.$ (Actually, this construction holds locally, near any given point $w_0\in\lO_\IL.$) The restriction of this line bundle to the exceptional set $\lp^{-1}(\lO_\IL)$ in the blow-up space was shown to be an invariant for the submodule $\wt\ML$. 

For the submodule $\wt\ML=H_{(0,0)}^2(\Dl^2)\subseteq H^2(\Dl^2)$ of the Hardy module, corresponding to the point singularity 
$(0,0)\in\lO:=\Dl^2,$ the above construction can be made very explicit: The eigenspace of $\wt\ML$ at $w:=(w_1,w_2)\ne(0,0)$ is the one dimensional space spanned by the truncated kernel vector 
\be{2}\wt\KL_w(z):=\f1{(1-\o w_1 z_1)(1-\o w_2 z_2)}-1=\f{\o w_1 z_1+\o w_2 z_2-\o w_1 z_1\o w_2 z_2}{(1-\o w_1 z_1)(1-\o w_2 z_2)}.\ee
At $(0,0),$ this vector is the zero vector while the eigenspace of $\wt\ML$ is two dimensional, spanned by the vectors $z_1$ and $z_2$. We observe, however, that for $j=1,2$ the limit $\f{\wt\KL_w(z)}{w_j}$, along lines through the origin as $w\to 0,$ exists and is non-zero. Parametrizing the lines through $(0,0)$ in $\Dl^2$ by $w_2=\lt_1w_1$ or $w_1=\lt_2 w_2$, we obtain the coordinate charts for the Projective space $\Pl^1(\Cl)$. On these, we have
$$\lim_{\o w_2=\lt_1\o w_1,\,\o w\to 0}\f{\wt\KL_w(z)}{\o w_1}=z_1+\lt_1 z_2.$$
Similarly, we have 
$$\lim_{\o w_1=\lt_2\o w_2,\,\o w\to 0}\f{\wt\KL_w(z)}{\o w_2}=z_2+\lt_2 z_1.$$
Setting $s(\lt_1):=z_1+\lt_1 z_2$ and $s(\lt_2)=z_2+\lt_2 z_1$ taking values in $H_{(0,0)}^2(\Dl^2)$, we obtain a holomorphic Hermitian line bundle $\h\LL$ over projective space $\Pl^1(\Cl)$. The metric of this line bundle is given by the formula 
$$\|s(\lt_j)\|^2_{\wt\ML}=1+|\lt_j|^2$$
for $j=1,2$. It is shown in \cite[Theorem 5.1]{DMV}, see also \cite[Theorem 3.4]{SBisM}, that for many submodules of analytic Hilbert modules, the class of this holomorphic Hermitian line bundle on the projective space is an invariant for the submodule. Since the curvature is a complete invariant, it follows that in our case the curvature  
$$\lk(\lt_j)=(1-|\lt_j|^2)^{-2} d\lt_j\yi d\o\lt_j$$
for the coordinate $\lt_j$ ($j=1,2$) is an invariant for the submodule $H_{(0,0)}^2(\Dl^2)$.

Often it is possible to determine when two submodules of an analytic Hilbert module are isomorphic without explicitly computing a set of invariants.  A particular case is the class of submodules in an analytic Hilbert modules which are obtained by taking the closure of an ideal in the polynomial ring. Here the surprising discovery is that many of these submodules are isomorphic if and only if the ideals are equal. Of course, one must impose some mild condition on the nature of the ideal. For instance, principal ideals have to be excluded. Several different hypotheses that make this "rigidity phenomenon" possible are discussed in Section 3 of \cite{DPSY}. One of these is the theorem of \cite[Theorem 3.6]{DPSY}. A slightly different formulation given below is Theorem 3.1 of \cite{SBisM}.  

Let $\lO\ic\Cl^d$ be a bounded domain. For $k=1,2,$ let $[\IL_k]$ be the closure in an analytic Hilbert module 
$\ML\subseteq\x{Hol}(\lO)$ of the ideal $\IL_k\subseteq\Cl[\zl]$.
  
\begin{theorem}[Theorem 3.1, \cite{SBisM}] \label{r} 
Assume that the dimension of $[\IL_k]/[\IL_k]_w$ is finite and that the dimension of the zero set of these modules is at most $d-2$. Also, assume that every algebraic component of $V(\IL_k)$ intersects $\lO$. Then $[\IL_1]$ and $[\IL_2]$ are isomorphic if and only if 
$\IL_1=\IL_2$.
\end{theorem}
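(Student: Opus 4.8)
The forward implication is trivial: if $\IL_1=\IL_2$ then the two closures coincide and the identity is a module isomorphism. The content is the converse, and the plan is to convert a unitary module isomorphism into a \emph{nowhere-vanishing} holomorphic multiplier and then read off equality of the ideals stalk by stalk.

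First I would take a unitary module isomorphism $U\colon[\IL_1]\ti[\IL_2]$ and apply the rigidity of the bundle picture recalled in the Introduction. Over the open set $\lO_0:=\lO\sm\big(V(\IL_1)\iu V(\IL_2)\big)$ each truncated kernel $\KL^k$ is nonvanishing and the associated eigenbundle $\t\LL_k$ is a line bundle; hence $U$ is implemented by a holomorphic function $\lf$ on $\lO_0$ with
$$\KL^1(z,w)=\lf(z)\,\o{\lf(w)}\,\KL^2(z,w),\qquad z,w\in\lO_0,$$
and $\lf$ is nowhere zero because $U$ is invertible. Equivalently, multiplication by $\lf$ carries $[\IL_2]$ isometrically onto $[\IL_1]$, so that $\lf\.[\IL_2]=[\IL_1]$ as spaces of holomorphic functions.

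The dimension hypothesis enters next. Since $\dim V(\IL_k)\le d-2$, the exceptional set $\lO\sm\lO_0$ is an analytic set of codimension at least $2$; by the second Riemann extension theorem $\lf$ extends holomorphically across it, and applying the same reasoning to $U^{-1}$ shows $1/\lf$ extends as well. Thus $\lf$ is a unit in $\OL(\lO)$, i.e. a holomorphic function with no zeros on all of $\lO$. (This is exactly the step that fails in codimension one and is the reason principal ideals must be excluded.)

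It remains to pass from the analytic identity $\lf\.[\IL_2]=[\IL_1]$ to the algebraic identity $\IL_1=\IL_2$, and this is where I expect the real work to lie. The tool is the sheaf model: under the finiteness assumption $\dim[\IL_k]/[\IL_k]_w<\oo$, the coherent analytic sheaf generated by the submodule $[\IL_k]$ has stalk at each $w\in\lO$ equal to the localization $\IL_k\,\OL_w$ of the polynomial ideal. Granting this identification, the relation $\lf\.[\IL_2]=[\IL_1]$ localizes to $\IL_1\OL_w=\lf_w\,\IL_2\OL_w$ for every $w$; but $\lf_w$ is a unit of $\OL_w$, and multiplying an ideal by a unit leaves it unchanged, so $\IL_1\OL_w=\IL_2\OL_w$ for all $w\in\lO$. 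Finally I would globalize: a polynomial $g\in\IL_1$ has germ lying in $\IL_2\OL_w$ at every $w\in\lO$, and since every algebraic component of $V(\IL_2)$ meets $\lO$, primary decomposition together with the faithful flatness of $\OL_w$ over the corresponding algebraic local ring forces $g\in\IL_2$; the symmetric argument gives the reverse inclusion, whence $\IL_1=\IL_2$. The main obstacle is the sheaf-model stalk identification, which is precisely what the finiteness hypothesis is designed to guarantee; once it is in hand, the unit property of $\lf$ trivializes the comparison of the ideals.
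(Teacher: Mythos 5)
The paper does not actually prove Theorem \ref{r}: it is imported verbatim from Biswas--Misra \cite{SBisM} (Theorem 3.1 there) and used as background, so there is no in-paper argument to measure your proposal against. That said, your sketch reconstructs the strategy of the cited proof essentially correctly: a unitary module isomorphism between $[\IL_1]$ and $[\IL_2]$ is realized, via the one-dimensional eigenspaces spanned by the truncated kernels, as multiplication by a holomorphic function $\lf$ on $\lO\sm\big(V(\IL_1)\iu V(\IL_2)\big)$; the hypothesis $\dim V(\IL_k)\le d-2$ makes the exceptional set an analytic subset of codimension at least $2$, so the second Riemann extension theorem applied to $\lf$ and to $1/\lf$ (coming from $U^{-1}$) upgrades $\lf$ to a unit of $\OL(\lO)$; and the identification of the stalks of the coherent sheaf generated by $[\IL_k]$ with $\IL_k\OL_w$ reduces the comparison of ideals to the triviality that a unit does not change an ideal. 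The two points where your outline is thinner than the source are exactly the ones you flag yourself: the stalk identification is the technical heart of \cite{SBisM} (it is where the finite-codimension hypothesis $\dim[\IL_k]/[\IL_k]_w<\oo$ does real work, via coherence and privileged neighbourhoods, and is not automatic), and the final globalization from $\IL_1\OL_w=\IL_2\OL_w$ for all $w\in\lO$ to $\IL_1=\IL_2$ needs the ``every algebraic component meets $\lO$'' hypothesis to see every primary component, with some residual care required for embedded primes. As a blind reconstruction of a theorem the paper only cites, this is the right proof with the right division into routine and hard steps.
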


In this paper we study submodules of (scalar valued) analytic Hilbert modules ($n=1$) which are related to higher-dimensional singularities. Starting with the weighted Bergman spaces defined on a bounded symmetric domain, the submodules are determined by a vanishing condition on the "Kepler variety". The new feature is that the singularity set is not a complete intersection (in the sense of algebraic geometry) which means that the usual projectivization involving monoidal transforms (blow-up process) is not sufficient for the resolution of singularities. We will replace it by a higher-rank blow-up process, having as exceptional fibres compact hermitian symmetric spaces of higher rank instead of projective spaces. The charts and analytic continuation we use are adapted to the geometry of the Kepler variety. The simplest case of rank 1 reduces to the usual blow-up process. 

In this setting we again obtain a rigidity theorem which is not a special case of Theorem \ref{r}, since we do not consider different ideals (i.e. different subvarieties) for the singular modules, but we consider a fixed subvariety and vary the underlying "big" Hilbert module, by choosing an arbitrary coefficient sequence or, as a special case, a $K$-invariant probability measure. This situation is most interesting in the symmetric case, where one has a full scale of different Hilbert modules like the weighted Bergman spaces. Then we show that the "truncated" kernel of the submodule can be recovered from the reduction to the blow-up space. This is a kind of rigidity in the parameter space instead of selecting different ideals. 

\section{Jordan-Kepler Varieties}
Hilbert modules and submodules defined by analytic varieties have been mostly studied for domains $\lO$ which are strongly pseudoconvex with smooth boundary, or a product of such domains. From an operator-theoretic point of view, this is natural since for strongly pseudoconvex (bounded) domains, Toeplitz operators with continuous symbols (in particular, with symbols given by the coordinate functions) are essentially normal, so that the Toeplitz $C^*$-algebra generated by such operators is essentially commutative and has a classical Fredholm and index theory. There are, however, interesting classes of bounded domains which are only weakly pseudoconvex (and are therefore domains of holomorphy, by the Cartan-Thullen theorem) with a non-smooth boundary. A prominent class of such domains are the {\bf bounded symmetric domains} of arbitrary rank $r,$ which generalize the (strongly pseudoconvex) unit ball, having rank $r=1.$ The Hardy space and the weighted Bergman spaces of holomorphic functions on bounded symmetric domains have been extensively studied from various points of view (see, e.g., \cite{BBCZ,FK1,U}. More recently, irreducible subvarieties of symmetric domains, given by certain determinant type equations, have been studied in \cite{EU} under the name of 'Jordan-Kepler varieties.' This terminology is used since the rank $r=2$ case corresponds to the classical Kepler variety in the cotangent bundle of spheres \cite{BEY}

In order to describe bounded symmetric domains and their determinantal subvarieties, we will use the {\bf Jordan theoretic} approach to bounded symmetric domains which is best suited for harmonic and holomorphic analysis on symmetric domains. For background and details concerning the Jordan theoretic approach, we refer to \cite{FK2,L,U}.    

Let $V$ be an irreducible hermitian Jordan triple of rank $r,$ with Jordan triple product denoted by $\{u;v;w\}.$ The so-called {\bf spectral unit ball} $\lO\ic V$ is a bounded symmetric domain. Conversely, every (irreducible) bounded symmetric domain can be realized in this way. An example is the matrix space $V=\Cl^{r\xx s}$ with triple product
$$\{u;v;w\}:=uv^*w+wv^*u,$$
giving rise to the matrix ball
$$\lO=\{z\in\Cl^{r\xx s}:\ I_r-zz^*>0\}.$$
In particular, for rank $r=1$ we obtain the triple product
$$\{u;v;w\}:=(u|v)w+(w|v)u$$
on $V=\Cl^d,$ with inner product $(u|v),$ giving rise to the unit ball
$$\lO=\{z\in\Cl^d:\ (z|z)<1\}.$$
Let $G$ denote the identity component of the full holomorphic automorphism group of $\lO$. Its maximal compact subgroup
$$K:=\{k\in G:\ k(0)=0\}$$
consists of linear transformations preserving the Jordan triple product. For $z,w\in V$ define the {\bf Bergman operator} $B_{z,w}$ acting on $V$ by
$$B_{z,w}v=v-\{z;w;v\}+\f14\{z\{w;v;w\}z\}.$$
We can also write 
\be{1}B_{z,w}=I-D(z,w)+Q_zQ_w,\ee
where
$$D(z,w)v=\{z;w;v\},$$
and 
$$Q_zw:=\{z;w;z\}$$
denotes the so-called quadratic representation (conjugate linear in $w$). For matrices, we have
$D(z,w)v=zw^*v+vw^*z,\ Q_zw=zw^*z$ and hence 
\be{13}B_{z,w}v=(1_r-zw^*)v(1_s-w^*z).\ee
An element $c\in V$ satisfying $c=Q_cc$ is called a {\bf tripotent}. For matrices these are the partial isometries. Any tripotent $c$ induces a {\bf Peirce decomposition} 
$$V=V_2^c\op V_1^c\op V_0^c.$$
We have 
$$d_\l:=\dim\r V_\l=d_2^c+d_1^c,$$
where
$$d_2^c=\dim V_2^c=\l(1+\f a2(\l-1)),$$
$$d_1^c=\dim V_1^c=\l(a(r-\l)+b).$$
Here $a,b$ are the so-called characteristic multiplicities defined in terms of a joint Peirce decomposition \cite{L}. Moreover,
$$\f{2d_2^c+d_1^c}{\l}=2(1+\f a2(\l-1))+a(r-\l)+b=2+a(r-1)+b=p$$
is the genus. As a fundamental property, there exists a {\bf Jordan triple determinant} 
\be{12}\lD:V\xx V\to\Cl,\ee 
which is a (non-homogeneous) sesqui-polynomial satisfying
$$\det B_{z,w}=\lD(z,w)^p.$$
For $(r\xx s)$-matrices, we have $p=r+s$ and 
$$\lD(z,w)=\det(1_r-zw^*)$$ 
as a consequence of \er{13}. In particular, $\lD(z,w)=1-(z|w)$ in the rank 1 case $V=\Cl^d.$ A hermitian Jordan triple $U$ is called 
{\bf unital} if it contains a (non-unique) tripotent $u$ such that $D(u,u)=2\.I.$ In this case $U$ becomes a Jordan *-algebra with unit element $u$ under the multiplication 
$$z\oc w:=\f12{z;u;w}$$ 
and involution 
$$z^*:=Q_uz=\f12\{u;z;u\}.$$ 
This Jordan algebra has a homogeneous {\bf determinant polynomial} $N:U\to\Cl$ defined in analogy to Cramer's rule for square matrices. Every Peirce 2-space $V_2^c$ is a unital Jordan triple with unit $c.$

Now we introduce certain $K$-invariant varieties. Every hermitian Jordan triple $V$ has a natural notion of {\bf rank} defined via spectral theory. For fixed $\l\le r$ let 
$$\r V_\l=\{z\in V:\ \x{rank}(z)=\l\}$$
denote the {\bf Jordan-Kepler manifold} studied in \cite{EU}. It is a $K^\Cl$-homogeneous manifold whose closure is the {\bf Jordan-Kepler variety}
$$V_\l=\{z\in V:\ \x{rank}(z)\le\l\}.$$
One can show that the smooth part of $V_\l$ (in the sense of algebraic geometry) is precisely given by $\r V_\l.$ Thus the 
{\bf singular points} of $V_\l$ form the closed subvariety $V_{\l-1},$ which has codimension $>1,$ unless we have the case 
$\l=r$ for tube domains ($b=0$). This case will be excluded in the sequel. The {\bf center} $S_\l\ic\r V_\l$ consists of all tripotents of rank $\l.$ 

\section{Hilbert modules on Kepler varieties}
Combining the Kepler variety and the spectral unit ball, we define the {\bf Kepler ball}
$$\lO_\l:=\lO\ui V_\l$$
for any $0\le\l\le r.$ The Kepler ball $\lO_\l$ has singularities exactly at $\lO_{\l-1},$ so that the smooth part of $\lO_\l$ is given by
$$\r\lO_\l:=\r V_\l\ui\lO_\l=\lO_\l\setminus\lO_{\l-1}.$$ 
Apart from the case $\l=r$ on tube type domains, which we exclude here, the singular set $\lO_{\l-1}\ic\lO_\l$ has codimension 
$>1.$ Combining this with the fact that $V_\l$ is a normal variety (so that the second Riemann extension theorem holds) it follows that every holomorphic function on $\r\lO_\l$ has a unique holomorphic extension to $\lO_\l.$ Henceforth we will identify holomorphic functions on $\r\lO_\l$ with their unique holomorphic extension to $\lO_\l.$ For any $K$-invariant measure 
$\lr$ on $\r V_\l$ we have a {\bf polar integration formula}
$$\I_{\r V_\l}d\lr(z)\ f(z)=\I_{\lL_2^c}d\lr^c(t)\I_K dk\ f(k\F t)$$
where $\lr^c$ is a measure on the symmetric cone $\lL_2^c$ of $V_2^c$ \cite{FK2} called the {\bf radial part} of $\lr.$ Here $\F t$ denotes the Jordan algebraic square root in $\lL_2^c.$ As a special case, consider the {\bf Riemann measure} $\ll_\l(dz)$ on $\r V_\l$ which is induced by the normalized inner product on $V.$ Denoting by $\lF_\l$ the Koecher-Gindikin Gamma function of $\lL_2^c$ 
\cite{FK2}, its polar decomposition is
\be{17}\I_{\r V_\l}\f{\ll_\l(dz)}{\lp^{d_\l}}\ f(z)=\f{\lG_\l(\f{a\l}2)}{\lG_\l(\f dr)\lG_\l(\f{ar}2)}\I_{\lL_2^c}dt\ N_c(t)^{d_1^c/\l}
\I_K dk\ f(k\F t).\ee
Here $N_c$ is the Jordan algebra determinant on $V_2^c$ normalized by $N_c(c)=1.$ For $\l=r$ the Riemann measure on the open dense subset $\r V_r=\r V\ic V$ agrees with the Lebesgue measure, and \er{17} gives the well-known formula
$$\I_V\f{dz}{\lp^d}\ f(z)=\f1{\lG(\f dr)}\I_{\lL_2^e}dt\ N_e(t)^b\I_K dk\ f(k\F t)$$
for any maximal tripotent $e\in S=S_r.$ As a consequence of \er{17} we have for the Kepler ball
\be{18}\I_{\r\lO_\l}\f{\ll_\l(dz)}{\lp^{d_\l}}\ \lD(z,z)^{\ln-p}\ f(z)
=\f{\lG_\l(\f{a\l}2)}{\lG_\l(\f dr)\lG_\l(\f{ar}2)}\I_{\lL_2^c\ui(c-\lL_2^c)}dt\ N_c(t)^{d_1^c/\l}\ N_c(c-t)^{\ln-p}\I_K dk\ f(k\F t)\ee
since $\lD(k\F t,k\F t)=\lD(\F t,\F t)=N_c(c-t)$ for all $t\in\lL_2^c\ui(c-\lL_2^c).$ 

As a fundamental fact \cite{FK2,U} of harmonic analysis on Jordan algebras and Jordan triples, the Fischer-Fock reproducing kernel 
$e^{(z|w)},$ for the normalized $K$-invariant inner product $(z|w)$ on $V,$ has a "Taylor expansion"
$$e^{(z|w)}=\S_\m E^\m(z,w)$$
over all integer partitions $\m=m_1\ge m_2\ge\;\ge m_r\ge 0,$ where $E^\m(z,w)=E_w(z)$ are sesqui-polynomials which are $K$-invariant such that the finite-dimensional vector space
$$\PL_\m(V)=\{E_w^\m:\ w\in V\}$$
is an irreducible $K$-module. These $K$-modules are pairwise inequivalent and span the polynomial algebra $\PL(V).$ Let
$$(\ln)_\m=\P_{j=1}^r(\ln-\f a2(j-1))_{m_j}$$
denote the multi-variable {\bf Pochhammer symbol}. Let $\Nl_+^r$ denote the set of all partitions of length $\le r.$ Restricted to the Kepler variety we only consider partitions in $\Nl_+^\l$ of length $\le\l,$ completed by zeroes at the end. 

\begin{lemma} For any partition $\m\in\Nl_+^\l$ of length $\le\l$ we have
\be{7}\I_{\lL_2^c\ui(c-\lL_2^c)}dt\ N_c(t)^{d_1^c/\l}\ N_c(c-t)^{\ln-p}\ N_\m(t)
=\f{\lG_\l(\f{d_\l}{\l})\ \lG_\l(\ln-\f{d_\l}{\l})}{\lG_\l(\ln)}\ \f{(d_\l/\l)_\m}{(\ln)_\m}.\ee
\end{lemma}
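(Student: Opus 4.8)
The plan is to read the left-hand side of \er{7} as a Beta-type integral over the symmetric cone $\lL_2^c$ of the rank-$\l$ Euclidean Jordan algebra $V_2^c$, whose dimension is $d_2^c$, whose determinant is $N_c$, and whose characteristic multiplicity is $a$; here $N_\m$ is the spherical polynomial attached to $\m$, normalized by $N_\m(c)=1$ (so $N_\m\equiv1$ when $\m=0$), and $dt$ is Lebesgue measure on $V_2^c$. The role of the ratio ``$n/r$'' is played by $d_2^c/\l=1+\f a2(\l-1)$. The first step is to match exponents. From $N_c(t)^{d_1^c/\l}=N_c(t)^{\la-d_2^c/\l}$ one reads off $\la=d_\l/\l$, since $d_2^c/\l+d_1^c/\l=d_\l/\l$; from $N_c(c-t)^{\ln-p}=N_c(c-t)^{\lb-d_2^c/\l}$ one reads off $\lb=\ln-p+d_2^c/\l=\ln-d_\l/\l$, using the identity $p=\f{2d_2^c+d_1^c}\l$ established above. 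Hence $\la+\lb=\ln$, and the right-hand side of \er{7} is precisely $\f{\lG_\l(\la)\,\lG_\l(\lb)}{\lG_\l(\la+\lb)}\,\f{(\la)_\m}{(\la+\lb)_\m}$, the value of the spherical-polynomial Beta integral on $\lL_2^c$ (here $\lG_\l$ is the Koecher-Gindikin Gamma function of the cone and $(\cdot)_\m$ its rank-$\l$ Pochhammer symbol, which for $\m$ of length $\le\l$ agrees with the symbol defined above).

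It therefore remains to prove, on $\lL_2^c$, the identity
$$\I_{\lL_2^c\ui(c-\lL_2^c)}dt\ N_\m(t)\,N_c(t)^{\la-d_2^c/\l}\,N_c(c-t)^{\lb-d_2^c/\l}=\f{\lG_\l(\la)\,\lG_\l(\lb)}{\lG_\l(\la+\lb)}\,\f{(\la)_\m}{(\la+\lb)_\m},$$
which I would obtain by the classical doubling argument. Consider
$$J=\I_{\lL_2^c}\I_{\lL_2^c}du\,dv\ e^{-\x{tr}(u+v)}\,N_\m(u)\,N_c(u)^{\la-d_2^c/\l}\,N_c(v)^{\lb-d_2^c/\l}.$$
The $v$-integral is the Koecher-Gindikin Gamma integral and equals $\lG_\l(\lb)$; the $u$-integral is its spherical refinement
$$\I_{\lL_2^c}du\ e^{-\x{tr}(u)}\,N_\m(u)\,N_c(u)^{s-d_2^c/\l}=\lG_\l(s)\,(s)_\m,$$
which follows by averaging the generalized-power Gamma integral over the compact automorphism group $L$ of the cone, since $N_\m$ is the $L$-average of a product of powers of the principal minors of $V_2^c$. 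Thus $J=\lG_\l(\la)\,(\la)_\m\,\lG_\l(\lb)$.

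On the other hand, I would evaluate $J$ by the substitution $w=u+v$, $u=P(\F w)t$, $v=P(\F w)(c-t)$ with $0<t<c$, where $P$ is the quadratic representation of $V_2^c$. Then $\x{tr}(u+v)=\x{tr}(w)$, $N_c(u)=N_c(w)N_c(t)$, $N_c(v)=N_c(w)N_c(c-t)$, and the Jacobian is $N_c(w)^{d_2^c/\l}$, so the powers of $N_c(w)$ combine to $N_c(w)^{(\la+\lb)-d_2^c/\l}$. The spherical factor becomes $N_\m(P(\F w)t)=N_\m(P(\F t)w)$ by the spectral symmetry of the quadratic representation ($P(\F w)t$ and $P(\F t)w$ share the same eigenvalues), and integrating it against $e^{-\x{tr}(w)}N_c(w)^{(\la+\lb)-d_2^c/\l}$ over $w\in\lL_2^c$ collapses, by the spherical-polynomial Laplace transform on the cone, to $\lG_\l(\la+\lb)\,(\la+\lb)_\m\,N_\m(t)$. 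This leaves exactly the Beta integral $B$ above, so $J=\lG_\l(\la+\lb)\,(\la+\lb)_\m\,B$. Comparing the two evaluations of $J$ yields the stated value of $B$, hence \er{7}.

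The main obstacle is the spherical polynomial $N_\m$: unlike the determinant $N_c$, it does not transform multiplicatively under the quadratic representation, so the substitution in the third step does not immediately separate variables. Overcoming this needs two Jordan-algebraic facts from \cite{FK2}: the spectral symmetry $N_\m(P(\F w)t)=N_\m(P(\F t)w)$ and the Laplace transform of $N_\m\,N_c^{\,s-d_2^c/\l}$. For $\m=0$ both are vacuous and the computation reduces to the classical symmetric-cone Beta function of \cite{FK2}; the content of the lemma is precisely the uniform control of the spherical correction factors $(\la)_\m/(\la+\lb)_\m$.
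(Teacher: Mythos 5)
Your proof is correct, and its first step --- matching exponents so that the integral is recognized as the Beta integral of the cone $\lL_2^c$ with parameters $\la=d_\l/\l$ and $\lb=\ln-d_\l/\l$, hence $\la+\lb=\ln$ --- is exactly the bookkeeping the paper performs. The difference lies in what happens next: the paper simply quotes the Beta integral with a conical/spherical factor as Theorem VII.1.7 of Faraut--Kor\'anyi \cite{FK2}, applied to $\lL_2^c$, obtaining $\f{\lG_\l(\m+d_\l/\l)\ \lG_\l(\ln-d_\l/\l)}{\lG_\l(\m+\ln)}$ in one line and then rewriting it with Pochhammer symbols exactly as you do; you instead re-derive that theorem by the doubling argument, evaluating the double Gamma-type integral $J$ in two ways and using the spherical Laplace transform together with the spectral symmetry $N_\m(P(\F w)t)=N_\m(P(\F t)w)$ to separate variables after the substitution $u=P(\F w)t$. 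Your derivation is essentially the textbook proof of the cited result, so nothing is gained in generality, but it does make the lemma self-contained modulo the Gamma integral and the Laplace transform of spherical polynomials, and you correctly isolate the one genuine subtlety (that $N_\m$, unlike $N_c$, is not multiplicative under the quadratic representation). Neither you nor the paper records the convergence condition on $\ln$ needed for the integral to exist, so that is not a gap relative to the source.
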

\begin{proof} Applying \cite[Theorem VII.1.7]{FK2} to $\lL_2^c$ yields
$$\I_{\lL_2^c\ui(c-\lL_2^c)}dt\ N_c(t)^{d_1^c/\l}\ N_c(c-t)^{\ln-p}\ N_\m(t)
=\f{\lG_\l(\m+\f{d_1^c}{\l}+\f{d_2^c}{\l})\ \lG_\l(\ln-p+\f{d_2^c}{\l})}{\lG_\l(\m+\ln-p+\f{d_1^c+2d_2^c}{\l})}$$
$$=\f{\lG_\l(\m+\f{d_\l}{\l})\ \lG_\l(\ln-\f{d_\l}{\l})}{\lG_\l(\m+\ln)}
=\f{\lG_\l(\f{d_\l}{\l})\ \lG_\l(\ln-\f{d_\l}{\l})}{\lG_\l(\ln)}\ \f{(d_\l/\l)_\m}{(\ln)_\m}.$$
\end{proof}

Let $du$ be the $K$-invariant probability measure on $S_\l$ and put
\be{5}(f|g)_{S_\l}=\I_{S_\l}du\ \o{f(u)}\ g(u)=\I_K dk\ \o{f(kc)}\ g(kc).\ee

\begin{definition} Consider a coefficient sequence $(\lr_\m)_{\m\in\Nl_+^\l}$ normalized by $\lr_0=1.$ Define a Hilbert space 
$\ML=\ML_\lr$ of holomorphic functions on $\lO_\l$ by imposing the $K$-invariant inner product
\be{4}(f|g)_\lr:=\S_{\m\in\Nl_+^\l}\lr_\m(f_\m|g_\m)_{S_\l}.\ee
where $f_\m\in\PL_\m(V)$ denotes the $\m$-th component of $f.$ 
\end{definition}

The {\bf subnormal case} arises when the inner product \er{4} has the form
$$(f|g)_\lr=\I d\lr(z)\ \o{f(z)} g(z),$$
where $\lr$ is a $K$-invariant probability measure on the closure of $\lO_\l$ or a suitable $K$-invariant subset which is a set of uniqueness for holomorphic functions. For the case $\l=r$, this was studied in detail for the tube type domains in \cite{BM} and completed for all bounded symmetric domains in \cite{AZ}. By \cite[Proposition 4.4]{EU} the Hilbert space 
$$\ML=\ML_\lr:=\{\lf\in L^2(d\lr):\ \lf\mbox{ holomorphic on }\lO_\l\}$$
has the coefficient sequence
$$\lr_\m=\I_{\lL_2^c}d\lr^c(t)\ N_\m(t)$$
given by the {\bf moments} of the radial part $\lr^c,$ which is a probability measure on $\lL_2^c$ (not necessarily of full support).  As a special case the Hardy type inner product \er{5}, corresponding to the $K$-invariant probability measure $du$ on $S_\l,$ has the point mass at $c$ as its radial part, showing that all radial moments $\lr_\m=1.$

It is clear that the Hilbert spaces $\ML_\lr$ defined by $K$-invariant measures are analytic Hilbert modules as defined above (however, consisting of holomorphic functions on a manifold $\r\lO_\l$ instead of a domain). For more general coefficient sequences $\lr_\m,$ one could in principle determine whether multiplication operators by polynomials are bounded (using certain growth conditions on the coefficient sequence), and whether the other requirements for analytic Hilbert modules hold. Important examples are listed below where the reproducing kernels are given by hypergeometric series. For the classical case $\l=r,$ the well-understood analytic continuation of the scalar holomorphic discrete series of weighted Bergman spaces on $\lO=\lO_r$ \cite{FK1} shows that the Hilbert module property extends beyond the subnormal case.  

\begin{proposition} For a given coefficient sequence $\lr_\m,$ $\ML$ has the {\bf reproducing kernel}
\be{6}\KL(z,w)=\S_{\m\in\Nl_+^\l}\f{(d/r)_\m}{\lr_\m}\ \f{(ra/2)_\m}{(\l a/2)_\m}\ E^\m(z,w).\ee
\end{proposition}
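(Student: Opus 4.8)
The plan is to diagonalize the inner product \er{4} over the Peter--Weyl decomposition $\PL(V)=\bigoplus_\m\PL_\m(V)$ (summing over $\m\in\Nl_+^\l$, since partitions of length $>\l$ vanish on $V_\l$) and to determine, block by block, the proportionality constant relating the center inner product $(\.|\.)_{S_\l}$ to the Fischer--Fock inner product, whose reproducing kernel on $\PL_\m(V)$ is exactly $E^\m$.

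First I would use that the $\PL_\m(V)$ are pairwise inequivalent irreducible $K$-modules, so that by Schur's lemma they are mutually orthogonal for \emph{every} $K$-invariant inner product, in particular for $(\.|\.)_\lr$ and for $(\.|\.)_{S_\l}$. Hence the reproducing kernel splits as $\KL(z,w)=\sum_\m\KL_\m(z,w)$, where $\KL_\m$ is the reproducing kernel of $\PL_\m(V)$ carrying the restriction of $(\.|\.)_\lr$; by the definition \er{4} this restriction is precisely $\lr_\m(\.|\.)_{S_\l}$. Writing $(\.|\.)_F$ for the Fischer--Fock inner product, so that $E^\m$ is the reproducing kernel of $\PL_\m(V)$ for $(\.|\.)_F$ (this is the content of $e^{(z|w)}=\sum_\m E^\m(z,w)$), a second application of Schur's lemma gives a positive scalar $c_\m$ with $(\.|\.)_{S_\l}=c_\m\,(\.|\.)_F$ on $\PL_\m(V)$ (for $\m\in\Nl_+^\l$ the restriction to $S_\l$ is injective, so $(\.|\.)_{S_\l}$ is indeed an inner product there). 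Consequently $\KL_\m=(\lr_\m c_\m)^{-1}E^\m$, and the whole problem collapses to computing $c_\m$.

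To evaluate $c_\m$ I would invoke the trace identity for a finite-dimensional reproducing kernel: integrating the kernel $c_\m^{-1}E^\m$ of $(\PL_\m(V),(\.|\.)_{S_\l})$ over the diagonal against $du$ returns $\dim\PL_\m(V)$. Since $E^\m$ is $K$-invariant and $S_\l=Kc$ is a single $K$-orbit, the diagonal value is constant, $E^\m(u,u)=E^\m(c,c)$, and $du$ is a probability measure; hence $c_\m^{-1}E^\m(c,c)=\dim\PL_\m(V)$, i.e. $c_\m=E^\m(c,c)/\dim\PL_\m(V)$. Substituting back gives $\KL(z,w)=\sum_\m\lr_\m^{-1}\,\big(\dim\PL_\m(V)/E^\m(c,c)\big)\,E^\m(z,w)$, so that the asserted formula \er{6} is equivalent to the single Jordan-theoretic identity $\dim\PL_\m(V)/E^\m(c,c)=(d/r)_\m\,(ra/2)_\m/(\l a/2)_\m$ for a tripotent $c$ of rank $\l$.

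The main obstacle is exactly this last identity, namely the diagonal value of the Fischer--Fock kernel on a rank-$\l$ tripotent. Here I would pass to the Peirce-2 space $V_2^c$, which is a tube-type hermitian Jordan triple of rank $\l$ with multiplicities $(a,0)$, unit $c$, and dimension $d_2^c=\l(1+\f a2(\l-1))$. The conical polynomial $z\mapsto E^\m(z,c)$ depends only on the Peirce-2 component $P_2^c z$ and restricts there to the Fischer--Fock kernel of $V_2^c$, so evaluation at $z=c$ yields $E^\m(c,c)=E^\m_{V_2^c}(c,c)$; applying the maximal-tripotent case inside the smaller triple gives $E^\m_{V_2^c}(c,c)=\dim\PL_\m(V_2^c)/(d_2^c/\l)_\m$ with $d_2^c/\l=1+\f a2(\l-1)$. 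Inserting the Faraut--Kor\'anyi dimension formulas for $\PL_\m(V)$ and $\PL_\m(V_2^c)$, cf. \cite{FK2}, then collapses $\dim\PL_\m(V)/E^\m(c,c)$ to the Pochhammer quotient $(d/r)_\m(ra/2)_\m/(\l a/2)_\m$; this bookkeeping with the dimension formula is the real computational heart of the argument. As a check, for $\l=r$ the tripotent $c=e$ is maximal, the factors $(ra/2)_\m$ and $(\l a/2)_\m$ cancel, and \er{6} reduces to the Hardy--Szeg\H o kernel $\lD(z,w)^{-d/r}=\sum_\m(d/r)_\m E^\m(z,w)$ over the Shilov boundary, as it must.
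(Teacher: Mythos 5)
Correct, and essentially the paper's own route with its citations unpacked: the paper obtains \er{6} by quoting \cite[Proposition 4.3]{EU} (the kernel expressed through the dimension ratio $d_\m/d_\m^c$, which is exactly what your Schur-lemma/trace computation of $c_\m=E^\m(c,c)/\dim\PL_\m(V)$ combined with $E^\m(c,c)=d_\m^c/(d_2^c/\l)_\m$ amounts to) together with the identity $\f{d_\m}{d_\m^c}=\f{(d/r)_\m}{(d_2^c/\l)_\m}\ \f{(ra/2)_\m}{(\l a/2)_\m}$ from \cite[eq.~(5.5)]{EU}, which is precisely the Pochhammer identity to which you reduce the claim. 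The only step worth spelling out is that the restriction of $E^\m$ to $V_2^c\xx V_2^c$ coincides with the intrinsic Fischer--Fock component $E_c^\m$ of the Peirce $2$-space (needed for $E^\m(c,c)=d_\m^c/(d_2^c/\l)_\m$); this is standard and is used without comment elsewhere in the paper.
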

\begin{proof} This follows from \cite[Proposition 4.3]{EU} and the formula
$$\f{d_\m}{d_\m^c}=\f{(d/r)_\m}{(d_2^c/\l)_\m}\ \f{(ra/2)_\m}{(\l a/2)_\m}$$
obtained in \cite[equation (5.5) in the proof of Theorem 5.1]{EU}.
\end{proof}

We will now present some examples, where the reproducing kernel \er{6} can be expressed in closed form as a multivariate hypergeometric series defined in general by
$$\tensor[^{}_{p}]{\ba{\la_1,\;,\la_p}{\lb_1,\;,\lb_q}}{_q}(z,w)=\S_\m\f{(\la_1)_\m\:(\la_p)_\m}{(\lb_1)_\m\:(\lb_q)_\m}\ E^\m(z,w).$$
Applying \er{7} to $\m=0$ it follows that
$$\lr^\ln(dz)=\f{\lG_\l(\f dr)}{\lG_\l(\f{d_\l}{\l})}\ \f{\lG_\l(\f{ra}2)}{\lG_\l(\f{\l a}2)}\f{\lG_\l(\ln)}{\lG_\l(\ln-\f{d_\l}{\l})}
\ \f{\ll_\l(dz)}{\lp^{d_\l}}\ \lD(z,z)^{\ln-p}$$ 
is a probability measure on $\r\lO_\l.$ Moreover, applying \er{7} to any $\m\in\Nl_+^\l$ it follows that the measure $\lr^\ln$ has the coefficient sequence
$$\lr_\m^\ln=\ \f{(d_\l/\l)_\m}{(\ln)_\m}.$$
Thus the Hilbert space 
$$\ML_\ln:=\{\lf\in L^2(d\lr^\ln):\ \lf\mbox{ holomorphic on }\lO_\l\}$$
of holomorphic functions on $\lO_\l$ has the reproducing kernel
$$\KL(z,w)=\S_{\m\in\Nl_+^\l}\f{(d/r)_\m}{(d_\l/\l)_\m}\ \f{(ra/2)_\m}{(\l a/2)_\m}\ (\ln)_\m\ E^\m(z,w)
=\tensor[^{}_{3}]{\ba{\f dr,\ \f{ra}2,\ \ln}{\f{d_\l}{\l},\ \f{\l a}2}}{_2}(z,w).$$
In the classical case $\l=r$ we have the probability measure 
$$d\lr^\ln(z)=\f{\lG(\ln)}{\lG(\ln-\f dr)}\ \f{dz}{\lp^d}\ \lD(z,z)^{\ln-p}$$ 
on $\lO,$ whose reproducing kernel is given by
$$\KL(z,w)=\S_{\m\in\Nl_+^r}(\ln)_\m\ E^\m(z,w)=\tensor[^{}_{1}]{\ba{\ln}{}}{_0}(z,w)=\lD(z,w)^{-\ln}$$
according the Faraut-Kor\'anyi formula \cite{FK1}.

\section{The Singular Set and its Resolution}
The only strongly pseudoconvex symmetric domains are the unit balls of rank $r=1.$ Here the singularity $\lO_0$ consists of a single point $\{0\}.$ The classical procedure to resolve this singularity is the monoidal transformation (blow-up process) where a point is replaced by a projective space of appropriate dimension. As the main geometric result in this paper, we obtain a generalization of the blow-up process for higher dimensional Kepler varieties and domains of arbitrary rank. The Jordan theoretic approach leads to quite explicit formulas which generalize the equations of the classical blow-up process of a point.

The general procedure outlined in Section \ref{b} using monoidal transformations works in the case where the singularity is given by a regular sequence $g_1,\;,g_m$ of polynomials generating the vanishing ideal $\IL.$ In this case the variety is a smooth complete intersection. If $m=d$ equals the dimension, this variety reduces to a single point. The usual blow-up process around a point 
$0\in\Cl^d$ is the proper holomorphic map
$$\lp:\h\Cl^d\to\Cl^d$$
where
$$\h\Cl^d:=\{(w,U):\ w\in\Cl^d,\ U\in\Pl^{d-1},\ w\in U\}$$
is the tautological bundle over $\Pl^{d-1},$ with 'collapsing map' $\lp(w,U):=w.$ The map $\lp$ is biholomorphic outside the exceptional fibre $\lp^{-1}(0)=\Pl^{d-1}.$
For the Kepler varieties studied here the singular set $\lO_{\l-1}$ has higher dimension and is not a complete intersection (unless $\l=1$). Thus a regular generating sequence of polynomials does not exist. Instead, we use the harmonic analysis of polynomials provided by the Jordan theoretic approach to study the singular set. The main idea is to replace the projective space (a compact hermitian symmetric space of rank 1) by a compact hermitian symmetric space of higher rank, namely the {\bf Peirce manifold} 
$$M_\l=\{V_2^c:\ c\in S_\l\}$$ 
of all Peirce 2-spaces of rank $\l$ in $V.$ This can also be realized as the conformal compactification of the Peirce 1-space $V_c^1,$ for any rank $\l$ tripotent $c.$ For example, in the full matrix triple $V=\Cl^{r\xx s}$ the Peirce 1-space of $c=\bb{1_\l}000\in S_\l$ is given by
$$V_1^c=\bb0{\Cl^{\l\xx(s-\l)}}{\Cl^{(r-\l)\xx\l}}0.$$
Hence, in this case, the Peirce manifold $M_\l$ is the direct product of two Grassmann manifolds 
$$M_\l=\x{Grass}_\l(\Cl^r)\xx\x{Grass}_\l(\Cl^s).$$ 
In the simplest case $r=1$ we have $V=\Cl^d$ and for the tripotent $c=(1,0^{d-1})$ we have
$V_1^c=(0,\Cl^{d-1}).$ Its conformal compactification is $\h V_1^c=\Pl^{d-1},$ which is the exceptional fibre of the usual blow-up process for $0\in\Cl^d.$ More generally, for any non-zero tripotent $c$ we have $V_2^c=\Cl\.c$ and hence $V_1^c$ becomes the orhtogonal complement $c^\perp=\Cl^{d-1},$ with conformal compactification $\h V_1^c=\Pl^{d-1}.$ 

The standard charts of projective space $\Pl^{d-1}$ have the form 
$$\Lt_i:\Cl^{d-1}\to\Pl^{d-1},\quad\Lt_i(t_1,\;,\h t_i,\;,t_d):=[t_1:\;:1_i:\;:t_d]$$
using homogeneous coordinates on $\Pl^{d-1}.$ Note that for $1\le i\le d$, the rank 1 tripotent $c_i:=(0,\;,0,1,0,\;,0)\in\Cl^d$ has the Peirce 1-space 
$$V_1^{c_i}:=\{(t_1,\;,t_{i-1},0,t_{i+1},\;,t_d):\ (t_1,\;,\h t_i,\;,t_d)\in\Cl^{d-1}\}.$$
In the higher rank setting, the Bergman operators \er{1} serve to define canonical charts for the Peirce manifolds. For each tripotent 
$c\in S_\l$ and every $t\in V_1^c$ the transformation $B_{t,-c}\in K^\Cl$ preserves the rank. It follows that $B_{t,-c}c\in\r V_\l$ has a Peirce 2-space denoted by $[B_{t,-c}c].$ As shown in \cite{S} the map
\be{18}\Lt_c:V_1^c\to M,\ \Lt_c(t):=[B_{t,-c}c]\ee
is a holomorphic chart of $M.$ The range of the chart $\Lt_c$ is
$$M_c:=\{U\in M:\ N_U(c)\ne 0\}.$$
Here $N_U:U\to\Cl$ denotes a Jordan algebra determinant of the Jordan triple $U$ which, as a Peirce 2-space, is of tube type. The Jordan determinant is only defined after choosing a maximal tripotent in $U$ as a unit element, but any two such determinant functions differ by a non-zero multiple. It is shown in \cite{S} that the local charts $\Lt_c$ of $M_\l,$ for different tripotents 
$c,c'\in S_\l,$ are compatible and hence form a holomorphic atlas on $M_\l.$ 

One can make the passage $z\mapsto [z]$ to the Peirce 2-space more explicit by introducing the so-called (Moore-Penrose) pseudo-inverse. Every element $z\in\r V_\l$ has a {\bf pseudo-inverse} $\t z\in\r V_\l$ determined by the properties
$$Q_z\t z=z,\ Q_{\t z}z=\t z,\ Q_z\ Q_{\t z}=Q_{\t z}\ Q_t.$$
Using the pseudo-inverse, the orthogonal projection onto the Peirce 2-space of $V_2^z$ can be explicitly written down. 

\begin{lemma}\label{f} The pseudo-inverse of $B_{t,-c}c$ is given by
$$\t\Lt_c(t)=B_{t,-c}B_{t,-t}^{-1}c.$$
Thus the associated Peirce 2-space is
$$[\Lt_c(t):\t\Lt_c(t)]\in\h V_1^c\ic\h V.$$
\end{lemma}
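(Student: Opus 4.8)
The plan is to verify directly the three defining relations of the pseudo-inverse for the candidate
$w:=B_{t,-c}B_{t,-t}^{-1}c$ against $z:=B_{t,-c}c=\Lt_c(t)$; since the pseudo-inverse is \emph{uniquely} determined by those relations, establishing them identifies $w=\t z$. The only inputs are: first, that a tripotent is its own pseudo-inverse, so $\t c=c$ (immediate from $Q_cc=c$); second, the Peirce relations satisfied by $t\in V_1^c$, namely $Q_ct=\{c;t;c\}=0$ and $D(c,c)t=t$; and third, the fundamental structural identity $Q_{B_{x,y}v}=B_{x,y}\,Q_v\,B_{y,x}$ for Bergman operators (Loos \cite{L}). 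Applied with $(x,y)=(t,-c)$ this gives $Q_z=B_{t,-c}\,Q_c\,B_{-c,t}$, which is the lever converting every $Q$-condition on $(z,w)$ into a statement about $c$ conjugated by Bergman operators. A useful preliminary simplification, forced by $Q_ct=0$, is the relation $B_{-c,t}c=c$.

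First I would establish $Q_zw=z$. Substituting,
$$Q_zw=B_{t,-c}\,Q_c\,B_{-c,t}\,B_{t,-c}\,B_{t,-t}^{-1}c,$$
so everything hinges on simplifying the interior product $B_{-c,t}B_{t,-c}$ and then applying $Q_c$: the Peirce condition $Q_ct=0$ collapses the mixed terms, after which $Q_c$ composed with $B_{t,-t}^{-1}c$ reproduces $c$ up to the factor absorbed by the outer $B_{t,-c}$. The relation $Q_wz=w$ is handled symmetrically: one computes $Q_w$ by two applications of the structural identity (once for $B_{t,-c}$ and once for $B_{t,-t}^{-1}$, using that $B_{t,-t}$ is invertible since $\det B_{t,-t}=\lD(t,-t)^p\ne0$, and that $B_{t,-t}$ is self-transpose, $B_{t,-t}^\sharp=B_{-t,t}=B_{t,-t}$), again reducing to $Q_{\t c}c=Q_cc=c$. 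The commutation $Q_zQ_w=Q_wQ_z$ then follows by conjugating the trivial identity $Q_cQ_c=Q_cQ_c$ through the same transformations, once the two preceding relations have pinned down the intervening factors.

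Having identified $w=\t z$, the geometric conclusion is short. For $z\in\r V_\l$ with pseudo-inverse $\t z$, the idempotent $Q_zQ_{\t z}$ is exactly the Peirce-$2$ projection of $V$ onto $V_2^z$ (it reduces, for the support tripotent $e$ of $z$, to $Q_e^2$, the projector onto $V_2^e=V_2^z$). Hence $V_2^z=\x{Ran}(Q_zQ_{\t z})$ is determined by the pair $(z,\t z)=(\Lt_c(t),\t\Lt_c(t))$, which exhibits $V_2^z$ as the point $[\Lt_c(t):\t\Lt_c(t)]$ of the Peirce manifold, realized inside the conformal compactification $\h V_1^c\ic\h V$ through the element-and-pseudo-inverse pair. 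This is precisely the second assertion.

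I expect the main obstacle to be the operator bookkeeping in the middle step: the factors $B_{-c,t}B_{t,-c}$ and $Q_cB_{-c,t}$ do not commute past one another, and keeping the non-commuting $\l\xx\l$ blocks in the correct order is exactly the feature distinguishing the higher-rank case $\l\ge2$ from the classical rank-$1$ blow-up, where those blocks are scalars and the ordering is invisible. A robust way to carry out this step, and to guard against order-of-factor errors, is to reduce to the matrix triple $V=\Cl^{r\xx s}$, where $B_{z,w}v=(1_r-zw^*)v(1_s-w^*z)$, $Q_zw=zw^*z$, and the pseudo-inverse is the (conjugated) Moore--Penrose inverse are all explicit: there the three relations are checked by a direct block computation for $c=\bb{1_\l}000$ and $t=\bb0{t_{12}}{t_{21}}0\in V_1^c$. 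Since all relations in question are polynomial identities in the triple product (after clearing the denominator $\lD(t,-t)^p$), their validity in the matrix triples propagates to every hermitian Jordan triple.
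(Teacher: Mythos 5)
The paper states Lemma \ref{f} without any proof (it is asserted as part of the description of the charts, and the authors add that ``these more refined descriptions of the local charts will not be needed in the sequel''), so there is no argument of the authors to compare yours against; I can only assess your proposal on its own terms. Your strategy --- verify the three defining relations for the candidate $w=B_{t,-c}B_{t,-t}^{-1}c$ against $z=B_{t,-c}c$ via the fundamental identity $Q_{B_{x,y}v}=B_{x,y}Q_vB_{y,x}$ and the Peirce relations for $t\in V_1^c$ --- is the natural one, and your preliminary observations ($\widetilde c=c$, $B_{-c,t}c=c$, $B_{t,-c}^{*}=B_{c,-t}$, $Q_zQ_{\widetilde z}$ being the Peirce-$2$ projection) are all correct. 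The genuine gap sits exactly where you predicted ``the main obstacle'' would be: the verification of $Q_zw=z$ is never actually carried out, and when one performs it in the matrix model you propose as a safeguard, it fails. With $V=\mathbf{C}^{r\times s}$, $c=\begin{pmatrix}1_\ell&0\\0&0\end{pmatrix}$, $t=\begin{pmatrix}0&t_{12}\\t_{21}&0\end{pmatrix}$, one gets $z=B_{t,-c}c=ab$ with $a=\begin{pmatrix}1_\ell\\t_{21}\end{pmatrix}$, $b=\begin{pmatrix}1_\ell&t_{12}\end{pmatrix}$; the three relations force $\widetilde z=a\,(a^*a)^{-1}(bb^*)^{-1}\,b$ with $a^*a=1_\ell+t_{21}^*t_{21}$ and $bb^*=1_\ell+t_{12}t_{12}^*$, whereas the candidate computes to $B_{t,-c}B_{t,-t}^{-1}c=a\,(bb^*)^{-1}(a^*a)^{-1}\,b$, with the two middle factors in the opposite order. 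Consequently $Q_zw=a\,(bb^*)(a^*a)^{-1}(bb^*)^{-1}(a^*a)\,b$ equals $z$ only when $t_{12}t_{12}^*$ and $t_{21}^*t_{21}$ commute --- automatic for $\ell=1$, false in general for $\ell\ge2$ (e.g.\ $r=s=3$, $\ell=2$, $t_{12}=(1,1)^{T}$, $t_{21}=(0,1)$). So the displayed formula must either be reinterpreted (the element that does satisfy the relations is $B_{t,-c}$ applied to the Jordan-algebra inverse of $P_cB_{t,-t}c$ inside $V_2^c$, which is ``the inverse of the projection'' rather than ``the projection of the inverse'') or supplemented by a commutation argument; your sketch, asserting that ``$Q_c$ composed with $B_{t,-t}^{-1}c$ reproduces $c$'', papers over precisely this non-commutativity.

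A second, independent gap is the transfer principle at the end: identities verified in the matrix triples $\mathbf{C}^{r\times s}$ do \emph{not} automatically propagate to every hermitian Jordan triple. The exceptional triple $\mathbf{O}_{\mathbf C}^{1\times 2}$ (explicitly listed in the paper's concluding remarks as a relevant non-tube-type case) embeds in no associative matrix triple, and by Glennie there are polynomial identities valid in all special Jordan systems that fail in the exceptional ones. Because your identities involve only the two generators $c$ and $t$, a Shirshov--Cohn/Macdonald-type theorem for Jordan pairs could in principle justify the transfer, but that has to be invoked and verified to apply in the triple-system setting --- and in any case it can only transfer an identity that actually holds in the matrix case, which, as noted above, the stated one does not for $\ell\ge 2$. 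The final geometric step (recovering $V_2^z$ as the range of the idempotent $Q_zQ_{\widetilde z}$ and hence the point $[\Lt_c(t):\widetilde\Lt_c(t)]$ of the Peirce manifold) is fine once a correct formula for $\widetilde z$ is in hand.
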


Combining these remarks, the chart \er{18} can be written down explicitly. It is also instructive to embed $M_\l$ into the conformal compactification $\h V$ of the underlying Jordan triple $V$ (the compact hermitian symmetric space that is dual to the spectral unit ball $\lO$). According to \cite{L} $\h V$ can elegantly be described using a certain equivalence relation $[z:w]$ for pairs $z,w\in Z.$As shown in \cite{S}, one may identify the Peirce 2-space $V_2^z$ with the equivalence class $[z;\t z]\in\h V.$ Thus the local chart 
\er{18} associated to a tripotent $c\in S_\l$ can also be expressed via the embedding
$$\Lt_c:V_1^c\to M_\l\ic\h V$$
given by
$$\Lt_c(t)=[z;\t z],$$
where $z:=B_{t,-c}c\in\r V_\l$ and $\t z$ is computed via Lemma \ref{f}. In the sequel these more refined descriptions of the local charts will not be needed.

Having found the exceptional fibre $M_\l$ for the higher-rank blow-up process, we now consider the {\bf tautological bundle}
$$\h V_\l=\{(w,U)\in V\xx M_\l:\ w\in U\}\ic V_\l\xx M_\l$$
over $M_\l,$ together with the {\bf collapsing map}
$$\lp:\h V_\l\to V_\l,\quad\lp(w,U):=w$$
whose range is $V_\l.$ In \cite{EU} this map is used to show that $V_\l$ is a {\bf normal variety}. This property implies the so-called second Riemann extension theorem for holomorphic functions, of crucial importance in the following.
For each $s\in V_2^c$ the rank $\l$ element
\be{19}\ls_c(s,t):=B_{t,-c}s\ee
has the same Peirce 2-space $\Lt_c(t)$ as $B_{t,-c}c.$ We define a local chart
$$\lr_c:V_2^c\xx V_1^c\to\h V_\l$$ 
by
\be{10}\lr_c(s,t):=(\ls_c(s,t),\Lt_c(t))\ee
By \er{19} the range of the chart $\lr_c$ is
$$\h V_\l^c:=\{(w,U)\in\h V_\l:\ U\in\x{Ran}\ \Lt_c\}=\{(w,U)\in\h V_\l:\ N_U(c)\ne 0\}.$$
One shows that the charts $\lr_c,$ for $c\in S_\l,$ define a holomorphic atlas on $\h V_\l,$ such that the collapsing map 
$\lp:\h V_\l\to V_\l$ is holomorphic and is biholomorphic outside the singular set. We call $\h V_\l,$ together with the collapsing map the {\bf (higher rank) blow-up} of $V_\l.$ 

\begin{proposition} For rank 1, let $c:=(1,0).$ Then
$$\lr_c(s,t):=((s,st),[1:t])=((s,st),\Cl(1,t)),$$
where $s\in\Cl$ and $t\in\Cl^{d-1}.$ Here $[s:t]=[s:t_1:\;:t_{d-1}]$ denotes the homogeneous coordinates in $\Pl^{d-1}.$
\end{proposition}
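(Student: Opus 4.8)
The plan is to specialize the general chart $\lr_c(s,t)=(\ls_c(s,t),\Lt_c(t))$ of \er{10} to the rank $1$ triple $V=\Cl^d$ with $\{u;v;w\}=(u|v)w+(w|v)u$, and to evaluate each component by a direct computation with the Bergman operator. For the tripotent $c=(1,0)$ the Peirce spaces are $V_2^c=\Cl\.c$ and $V_1^c=\{0\}\xx\Cl^{d-1}$, so I would write $s\in V_2^c$ as $s\.c=(s,0)$ and $t\in V_1^c$ as $(0,t_1,\;,t_{d-1})$; in particular $(t|c)=0$ and $(c|c)=1$.

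First I would compute $B_{t,-c}$ on an arbitrary $v\in\Cl^d$ from the decomposition \er{1}, namely $B_{t,-c}=I-D(t,-c)+Q_tQ_{-c}$. Because $(t|c)=0$, the first-order term simplifies to $D(t,-c)v=\{t;-c;v\}=-(t|c)v-(v|c)t=-(v|c)\,t$. The crucial observation is that the quadratic term drops out entirely: $Q_{-c}v=\{-c;v;-c\}$ is a scalar multiple of $c$, while $Q_tc=\{t;c;t\}$ is a multiple of $(t|c)t=0$, so $Q_tQ_{-c}v=0$ irrespective of the normalization of $Q$. This yields the closed formula $B_{t,-c}v=v+(v|c)\,t$, where $(v|c)$ is the first coordinate of $v$.

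With this formula the two components are immediate. Applying it to $s\.c$, whose first coordinate is $s$, gives $\ls_c(s,t)=B_{t,-c}(s\.c)=s\.c+s\.t=(s,st)$. Applying it to $c$, whose first coordinate is $1$, gives $B_{t,-c}c=c+t=(1,t)$; since every element of $\r V_1$ has rank $1$, its Peirce $2$-space is the complex line it spans, so $\Lt_c(t)=[B_{t,-c}c]=\Cl(1,t)=[1:t]$ as a point of $M_1=\Pl^{d-1}$. Assembling the two pieces reproduces $\lr_c(s,t)=((s,st),[1:t])=((s,st),\Cl(1,t))$, as claimed.

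The only delicate point is the handling of the conjugate-linear quadratic representation $Q$; rather than track its normalization I would exploit that $Q_{-c}v$ lands in $\Cl\.c$ and that $Q_tc=0$ because $t\perp c$, which forces the cross term to vanish cleanly and makes the whole computation robust. The one remaining step is the identification of the Peirce $2$-space of the rank $1$ vector $(1,t)$ with the homogeneous coordinate $[1:t]$ of $\Pl^{d-1}$, which makes explicit that for $r=1$ the exceptional fibre $M_1$ is exactly the projective space of the classical blow-up.
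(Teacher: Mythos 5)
Your argument is correct and arrives at the same intermediate fact that drives the paper's proof, namely that $B_{t,-c}$ acts by $v\mapsto v+(v|c)\,t$, but it gets there through a different decomposition of the Bergman operator. The paper specializes the matrix factorization \er{13}: realizing $V=\Cl^{1\xx d}$, it computes $B_{t,-c}v=(1+tc^*)\,v\,(1+c^*t)$, notes that the left factor is the scalar $1$ because $tc^*=(t|c)=0$, and reads off $(s,0)\mapsto(s,st)$ as a single multiplication by the matrix $\bb{1}{t}{0}{1_{d-1}}$. You instead use the additive form \er{1}, $B_{t,-c}=I-D(t,-c)+Q_tQ_{-c}$, with the intrinsic rank-one triple product; the same orthogonality $(t|c)=0$ removes the $v$-component of $D(t,-c)v$ and forces $Q_tQ_{-c}=0$, since $Q_{-c}V\subseteq\Cl\.c$ and $Q_tc=0$, which, as you observe, also makes the normalization of the quadratic representation irrelevant. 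Both computations finish identically: the rank-one element $B_{t,-c}c=(1,t)$ has Peirce 2-space equal to the line it spans, so $\Lt_c(t)=\Cl\.(1,t)=[1:t]$. The paper's version is a two-line matrix computation; yours is realization-free and makes explicit why the quadratic correction in $B_{t,-c}$ disappears in rank one.
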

\begin{proof} Clearly, $V_2^c=\Cl\.c=(\Cl,0)=[1:,0]$ and $V_1^c=(0,\Cl^{d-1}).$ Then
$$\ls_c(s,t)=B_{t,-c}s=\(1+(0,t)\ba10\)(s,0)\(\bb1001+\ba10\ab0 t\)=(s,0)\bb1 t 01=(s,st).$$
In particular, $\ls_c(1,t)=(1,t)$ has the Peirce 2-space $\Lt_c(t)=\Cl\.(1,t)=[1:t].$ It follows that
$$\lr_c(s,t)=(\ls_c(s,t),\Lt_c(t))=((s,st),\Cl\.(1,t))=((s,st),[1:t]).$$
\end{proof}

More generally, taking for $c=e_i$ the $i$-th basis unit vector ($1\le i\le d$) we obtain local charts
$$\lr_i(\lz^i,\lz')=((\lz^i,\lz^i\lz'),\Cl(1^i,\lz'))=((\lz^i,\lz^i\lz'),[1^i:\lz'])$$
where $\lz'=(\lz^j)_{j\ne i}.$ The finitely many charts $\lr_i$ ($1\le i\le d$) form already a covering of $Q.$ Using the grid approach to Jordan triples one can similarly choose finitely many charts in the general case. However, for many arguments using $K$-invariance it is more convenient to take the continuous family of charts $(\ls_c)_{c\in S_\l}.$

Since the analytic Hilbert modules considered here are supported on the Kepler ball $\lO_\l=\lO\ui V_\l$ we restrict the tautological bundle to the open subset
$$\h\lO_\l:=\{(w,U)\in\h V_\l:\ w\in\lO\}$$
and obtain a collapsing map $\lp:\h\lO_\l\to\h\lO_\l$ by restriction. The main idea to study singular submodules $\wt\ML$ is now to construct a hermitian holomorphic line bundle $\h\LL$ over $\h\lO_\l,$ whose curvature will be the crucial invariant of $\wt\ML.$ 

\begin{proposition}\label{c} There exists a holomorphic line bundle $\h\LL$ on $\h\lO_\l$ consisting of all equivalence classes
\be{11}[s,t,\ll\ \o{N_c(s)}]_c=\[s',t',\ll\ \o{N_{c'}(s')}\]_{c'}\ee
with $\ll\in\Cl.$ Here $c,c'\in S_\l$ are tripotents such that 
\be{3}\lr_c(s,t)=\lr_{c'}(s',t')\ee
for $(s,t)\in V_2^c\xx V_1^c$ and $(s',t')\in V_2^{c'}\xx V_1^{c'}.$ 
\end{proposition}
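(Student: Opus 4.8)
The plan is to produce $\h\LL$ by the standard cocycle/gluing construction, taking the continuous atlas $(\lr_c)_{c\in S_\l}$ of $\h\lO_\l$ as the trivializing cover. Over each chart domain $\h\lO_\l^c$ I would declare the trivial line bundle with fibre coordinate $\xi\in\Cl$, and read off from \er{11} the transition rule: two local representatives $(s,t,\xi)_c$ and $(s',t',\xi')_{c'}$ lying over a common base point $\lr_c(s,t)=\lr_{c'}(s',t')$ are to be glued precisely when $\xi/\o{N_c(s)}=\xi'/\o{N_{c'}(s')}$, i.e.\ through the transition function
$$g_{cc'}=\f{\o{N_c(s)}}{\o{N_{c'}(s')}}.$$
Proving the proposition then amounts to four verifications: (i) $g_{cc'}$ is well defined and nowhere vanishing on each overlap $\h\lO_\l^c\ui\h\lO_\l^{c'}$; (ii) the cocycle identity $g_{cc'}\,g_{c'c''}=g_{cc''}$ holds on triple overlaps; (iii) hence \er{11} is a genuine equivalence relation whose quotient carries the structure of a line bundle; (iv) this bundle is holomorphic with holomorphic projection.

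Step (ii) is immediate, and is the whole point of the chosen normalization: each $g_{cc'}$ is the quotient $h_c/h_{c'}$ of the single chart-factor $h_c:=\o{N_c(s)}$ evaluated at the common base point, so the cocycle condition telescopes automatically and the relation \er{11} is consistent. Step (iv) follows from the fact, recalled just before the statement, that the charts $\lr_c$ form a holomorphic atlas on $\h V_\l$ (hence on the open subset $\h\lO_\l$), together with the regularity established in (i).

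The crux is (i), specifically the non-vanishing across the locus where the individual determinants $N_c(s)$ degenerate. I would fix the common base point $(w,U)=\lr_c(s,t)=\lr_{c'}(s',t')$; since $\Lt_c(t)=\Lt_{c'}(t')=U$ pins down $t$ and $t'$, the relation $w=B_{t,-c}s=B_{t',-c'}s'$ shows that, for fixed $t,t'$, the assignment $s\mapsto s'$ is the linear isomorphism $A:=B_{t',-c'}^{-1}B_{t,-c}$ of $V_2^c$ onto $V_2^{c'}$. The key Jordan-theoretic fact is that $A$, being built from Bergman operators lying in the structure group $K^\Cl$, intertwines the two Peirce-$2$ determinants up to a nonvanishing multiplier: $N_{c'}(As)=\chi_{cc'}(t)\,N_c(s)$ with $\chi_{cc'}(t)\ne0$. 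Consequently
$$g_{cc'}=\f{\o{N_c(s)}}{\o{N_{c'}(s')}}=\f1{\o{\chi_{cc'}(t)}}$$
depends only on the base point (not on the fibre direction $s$) and is nowhere zero; in particular the ratio extends across the degeneracy locus $N_c(s)=0$, even though numerator and denominator vanish there simultaneously. This is exactly what makes $\h\LL$ a line bundle over all of $\h\lO_\l$, including the exceptional fibre $M_\l$.

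The main obstacle is establishing this semi-invariance of the Jordan determinant under the base transition maps $A$. Concretely I would derive it from the transformation behaviour of the triple determinant $\lD$, and of the Peirce-$2$ determinants $N_c$, under the Bergman operators $B_{t,-c}\in K^\Cl$, using \er{1} and the identity $\det B_{z,w}=\lD(z,w)^p$, combined with the compatibility of the charts $\Lt_c$ proved in \cite{S}. Once the factor is isolated, e.g.\ as $N_U(B_{t,-c}s)=\lg_c(t)\,N_c(s)$ for a nonvanishing holomorphic $\lg_c$ and the analogous identity for $c'$, whence $\chi_{cc'}=\lg_c/\lg_{c'}$, the regularity and non-vanishing of $g_{cc'}$ follow and the gluing yields the asserted bundle. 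Finally, consistent with the conventions for the eigenbundle over $\lO^*$ recalled in the introduction, the conjugation in the fibre factor $\o{N_c(s)}$ matches the holomorphic structure on $\h\lO_\l$, so that $\h\LL$ is holomorphic in the required sense.
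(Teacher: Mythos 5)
Your construction is the same one the paper uses: glue trivial line bundles over the chart domains $\h V_\l^c$ by the transition factors $\o{N_{c'}(s')}/\o{N_c(s)}$, observe that these telescope so the cocycle identity is automatic, and conclude that \er{11} is a consistent equivalence relation defining a holomorphic line bundle. Where you differ is at the one delicate point, and there your version is the more careful one. The paper's proof disposes of the well-definedness of the transition factor with the single assertion that condition \er{3} ``implies that $N_c(s)$ and $N_{c'}(s')$ do not vanish''; taken literally this fails on the chart overlaps, since the fibres over the singular set $\lO_{\l-1}$ (for instance the points $(0,U)$, where $s=s'=0$) lie in those overlaps and there both determinants vanish. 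What is true, and what you correctly isolate as the crux, is that $s\mapsto s'=B_{t',-c'}^{-1}B_{t,-c}s$ is a linear isomorphism of $V_2^c$ onto $V_2^{c'}$ intertwining the two Peirce-$2$ determinants up to a nonzero multiplier $\chi_{cc'}(t)$ depending only on the base point, so that $\o{N_{c'}(s')}/\o{N_c(s)}=\o{\chi_{cc'}(t)}$ extends without zeros across the locus $N_c(s)=0$; this is precisely what makes $\h\LL$ a line bundle over all of $\h\lO_\l$ and not merely over the preimage of $\r\lO_\l$. The semi-invariance you leave to be derived is not a gap in substance: it follows either from the irreducibility of the Jordan determinant together with the coincidence of the degeneracy loci (both vanish exactly where the rank drops below $\l$), or from the structure-group argument the paper itself uses later (Lemma \ref{e} and Proposition \ref{h}, where $N_c(B_{t,-c}^*B_{t,-c}s)=\lD(t,t)\,N_c(s)$ is obtained in exactly this way). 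So your proposal is correct, follows the paper's route, and on the one point that matters supplies the justification the paper's own proof elides.
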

\begin{proof} The condition \er{3} implies $\ls_c(s,t)=\ls_{c'}(s',t')$ and 
$[\ls_c(1,t)]=\Lt_c(t)=\Lt_{c'}(t')=[\ls_{c'}(1,t')].$ This implies that $N_c(s)$ and $N_{c'}(s')$ do not vanish. Since the quotient maps $\f{\o{N_{c'}(s')}}{\o{N_c(s)}}$ satisfy a cocycle property, it follows that  
$$[s,t,\ll]_c=\[s',t',\ll\f{\o{N_{c'}(s')}}{\o{N_c(s)}}\]_{c'}$$
defines an equivalence relation yielding a holomorphic line bundle.
\end{proof}

At this point we do not fix a hermitian metric the line bundle $\LL$ over $\h D_\l.$ The metric depends on the choice of singular submodules $\wt\ML$ which will be defined below.

\section{Singular Hilbert Submodules}
Consider the partition
$$\je:=(1,\;,1,0,\;,0)$$
of length $\l,$ with 1 repeated $\l$ times. Given the Hilbert module $\ML=\ML_\lr$ as above, consider the $K$-invariant Hilbert submodule 
$$\wt\ML=\{\lq\in\ML:\ \lq|_{V_{\l-1}}=0\}.$$ 
The formula \er{6} yields the {\bf truncated kernel} in the form
\be{16}\wt\KL(z,w)=\S_{\m\in\Nl_+^\l}\f{(d/r)_{\m+\je}}{\lr_{\m+\je}}\ \f{(ra/2)_{\m+1}}{(\l a/2)_{\m+\je}}\ E^{\m+\je}(z,w),\ee
corresponding to vanishing of order $\ge 1$ on $V_{\l-1}.$ Using the identity
$$(\ln)_{\m+\je}=(\ln+1)_\m\ (\ln)_\je$$
one can also express this using Pochhammer symbols for $\m$ instead of $\m+\je.$
 
\begin{lemma}\label{g} Let $V$ be a unital Jordan triple, with Jordan algebra determinant $N.$ Then we have
$$E^{\m+\je}(z,w)=\f{(d/r)_\m}{(d/r)_{\m+\je}}\ N(z)\o{N(w)}\ E^\m(z,w).$$
\end{lemma}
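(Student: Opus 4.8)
The plan is to identify both sides as $K$-invariant sesqui-polynomials of the same bidegree and then pin down the single scalar relating them by evaluating at a convenient point. First I would observe that since $V$ is a unital Jordan triple of rank $r$, the partition $\je=(1,\ldots,1)$ has full length $r$, and the associated $K$-module $\PL_\je(V)$ is one-dimensional, spanned by the determinant $N$. Indeed, the fundamental identity $N(kz)=\chi(k)N(z)$ for $k\in K^\Cl$ (the determinant transforming by a character) shows that $N(z)\o{N(w)}$ is, up to a constant, the reproducing sesqui-polynomial $E^\je(z,w)$. The key structural input is that the product $N(z)\o{N(w)}\,E^\m(z,w)$ is again $K$-invariant and sits in $\PL_{\m+\je}(V)\xt\o{\PL_{\m+\je}(V)}$: adding the ``all-ones'' partition $\je$ to $\m$ corresponds precisely to multiplying the highest weight by the determinant character, so the Schur-type $K$-module $\PL_\m(V)$ is carried isomorphically onto $\PL_{\m+\je}(V)$ by multiplication with $N$. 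Hence both $E^{\m+\je}(z,w)$ and $N(z)\o{N(w)}\,E^\m(z,w)$ are $K$-invariant reproducing kernels for the same irreducible $K$-module, so by Schur's lemma they are equal up to a positive scalar $c_\m$.

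The remaining task is to compute $c_\m$. The cleanest route is to use the known dimension/norm data encoded in the Pochhammer symbols. Since $E^\nu(z,w)$ is the reproducing kernel of $\PL_\nu(V)$ for the Fischer--Fock inner product, evaluating the trace (equivalently, specializing $z=w$ and integrating over $K$, or taking $w\to z$ and using $E^\nu(z,z)=$ the Fock-normalized reproducing kernel) reduces the scalar to a ratio of the constants appearing in the Faraut--Kor\'anyi expansion. Concretely, I would compare the leading coefficients by pairing both sides against a fixed lowest-weight (or conical) polynomial $N_\je^{\,?}$, using that the conical polynomials multiply as $N_\m\cdot N_\je=N_{\m+\je}$ when $\je$ is the full-rank all-ones partition; this multiplicativity of the conical polynomials is exactly the ``determinant shift'' that produces the factor $N(z)$. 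The Fischer-norm of $E^\nu$ is governed by $(d/r)_\nu$ (this is the constant $\f{(d/r)_\m}{\lr_\m}$-type factor appearing throughout, specialized to the Fock case $\lr_\m\equiv 1$), so matching norms forces
$$
c_\m=\f{(d/r)_\m}{(d/r)_{\m+\je}}.
$$

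The main obstacle I anticipate is the bookkeeping in the scalar computation, not the structural identification. The isomorphism $\PL_\m(V)\cong\PL_{\m+\je}(V)$ via multiplication by $N$ is standard Jordan-theoretic highest-weight theory, so the qualitative identity $E^{\m+\je}=c_\m\,N\,\o N\,E^\m$ is essentially forced. What requires care is verifying that the normalizing constant is precisely the ratio of Pochhammer symbols $(d/r)_\m/(d/r)_{\m+\je}$ rather than some other ratio of $\lG_r$-values; this is where I would lean on the explicit norm formula for the spaces $\PL_\nu(V)$ (the constant is $1/(d/r)_\nu$ in the normalization where $e^{(z|w)}=\S_\nu E^\nu$), so that the multiplicativity $N_\m N_\je=N_{\m+\je}$ of conical polynomials combined with the norm ratio yields exactly the claimed factor. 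A subtlety to confirm is that $d/r$ is the correct parameter: here $V$ is the \emph{full} unital triple of rank $r$ (not the Peirce 2-space), so $d/r=1+\f a2(r-1)$ plays the role of the Fischer-norm parameter, and the shift by $\je$ of length $r$ is what makes the determinant character appear. Once these normalizations are aligned, the identity follows.
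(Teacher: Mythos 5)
Your argument is correct and has the same two-step skeleton as the paper's proof (proportionality by irreducibility, then computation of the scalar), but the two steps are handled differently. The paper simply \emph{writes} $E^{\m+\je}(z,w)=c_\m\,N(z)\o{N(w)}\,E^\m(z,w)$ without comment and determines $c_\m$ by evaluating at $z=w=e$, using the Faraut--Kor\'anyi value $E^\m(e,e)=d_\m/(d/r)_\m$ together with $N(e)=1$ and the dimension equality $d_{\m+\je}=d_\m$ in the unital case --- the latter being exactly the fact you prove, namely that multiplication by $N$ is a $K$-isomorphism $\PL_\m(V)\to\PL_{\m+\je}(V)$. So your Schur-lemma discussion supplies the justification for the structural step that the paper leaves implicit, which is a genuine improvement in completeness. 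For the scalar, you replace the pointwise evaluation at $(e,e)$ by a comparison of Fischer norms, using the multiplicativity $N_\m N_{\je}=N_{\m+\je}$ of conical polynomials and $\|N_\m\|_F^2\propto(d/r)_\m$; this is an equivalent standard input and yields the same $c_\m=(d/r)_\m/(d/r)_{\m+\je}$, though the paper's route is slightly cleaner since it never needs to discuss how the Fischer inner product interacts with multiplication by $N$ (Schur's lemma guarantees $\<Np,Nq\>_F=\lg\<p,q\>_F$ for a single constant $\lg$, which is the point you should make explicit if you keep the norm-comparison route). Your remarks on the normalization --- that $\je$ has full length $r$ for the unital triple $V$ of the lemma, that $\PL_\je(V)$ is one-dimensional and spanned by $N$, and that $d/r$ refers to the full triple rather than a Peirce subspace --- are all accurate and worth keeping, since in the application the lemma is invoked for the tube-type Peirce $2$-space $V_2^c$ with $r$ replaced by $\l$ and $d/r$ by $d_2^c/\l$.
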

\begin{proof} For tube type we have
$$E^\m(e,e)=\f{d_\m}{(d/r)_\m}.$$
Writing
$$E^{\m+\je}(z,w)=c_\m\ N(z)\o{N(w)}\ E^\m(z,w)$$
it follows that
$$\f{d_{\m+\je}}{(d/r)_{\m+\je}}=E^{\m+\je}(e,e)=c_\m\ E^\m(e,e)=c_\m\ \f{d_\m}{(d/r)_\m}.$$
Since $d_{\m+\je}=d_\m$ in the unital case, it follows that
$$c_\m=\f{(d/r)_\m}{(d/r)_{\m+\je}}.$$
\end{proof}

\begin{lemma} For $\m\in\Nl_+^\l$ we have for $s\in V_2^c$ and $t\in V_1^c$
$$E^{\m+\je}(z,B_{t,-c}s)=\f{(d_2^c/\l)_\m}{(d_2^c/\l)_{\m+\je}}\ N_c(P_cB_{t,-c}^*z)\ \o{N_c(s)}\ E^\m(z,B_{t,-c}s).$$
\end{lemma}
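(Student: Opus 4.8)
The plan is to push the whole identity down to the Peirce $2$-space $V_2^c,$ where it becomes the tube-type instance of Lemma \ref{g}, and then to transport it back to $V$ using the $K^\Cl$-action. Recall that $V_2^c$ is itself a unital Jordan triple of tube type, of rank $\l$ and dimension $d_2^c,$ with Jordan determinant $N_c.$ On $V_2^c$ the parameter $d/r$ of Lemma \ref{g} is replaced by $d_2^c/\l,$ so that lemma, applied to $V_2^c,$ reads
$$E_c^{\m+\je}(a,b)=\f{(d_2^c/\l)_\m}{(d_2^c/\l)_{\m+\je}}\ N_c(a)\ \o{N_c(b)}\ E_c^\m(a,b)\qquad(a,b\in V_2^c),$$
where $E_c^\lk$ denotes the Fischer--Fock kernel of $V_2^c.$ The task is thus to connect $E^\lk(z,B_{t,-c}s)$ for the full triple $V$ with $E_c^\lk$ on $V_2^c,$ for both $\lk=\m$ and $\lk=\m+\je.$

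First I would record a covariance property. Since $B_{t,-c}\in K^\Cl$ and each $\PL_\lk(V)$ is a $K$-module with reproducing kernel $E^\lk,$ the unitary invariance $E^\lk(kz,kw)=E^\lk(z,w)$ rewrites as $E^\lk(gz,w)=E^\lk(z,g^*w)$ for $g\in K$; as the right-hand side is holomorphic in $g$ (because $E^\lk$ is antiholomorphic in its second argument), this continues to all of $K^\Cl.$ Taking $g=B_{t,-c}^*$ gives
$$E^\lk(z,B_{t,-c}s)=E^\lk(B_{t,-c}^*z,s),$$
which already accounts for the factor $B_{t,-c}^*$ in the assertion.

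The second, and decisive, ingredient is a Peirce localization: for $s\in V_2^c$ and $\lk$ of length $\le\l,$ the polynomial $x\mapsto E^\lk(x,s)$ depends only on $P_cx,$ and equals $E_c^\lk(P_cx,s).$ Here is where I expect the real work. The operator $D(c,c)$ acts as $2,1,0$ on $V_2^c,V_1^c,V_0^c,$ so $k_\lt:=e^{\x i\lt(D(c,c)-2)}$ is a one-parameter subgroup of $K$ acting as $1,e^{-\x i\lt},e^{-2\x i\lt}$ on these three Peirce spaces; in particular $k_\lt s=s.$ The $K$-invariance of $E^\lk$ then yields $E^\lk(k_\lt x,s)=E^\lk(x,s),$ and averaging over $\lt\in[0,2\lp]$ annihilates every monomial in $x$ carrying a $V_1^c$- or $V_0^c$-component, leaving precisely the part that depends on $P_cx$; hence $E^\lk(x,s)=E^\lk(P_cx,s).$ Finally, restricting the expansion $e^{(a|b)}=\S_\lk E^\lk(a,b)$ to $a,b\in V_2^c$ and using the orthogonality of the Peirce decomposition identifies, term by term, $E^\lk(a,b)=E_c^\lk(a,b)$ for $\lk$ of length $\le\l$ (a standard restriction property of the Fischer--Fock kernel, see \cite{FK2}), while $E^\lk(x,b)=0$ for all $x$ when $\lk$ has length $>\l,$ since every polynomial in $\PL_\lk(V)$ vanishes on the rank-$\le\l$ elements. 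Combining the two equalities gives $E^\lk(x,s)=E_c^\lk(P_cx,s).$

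It remains to assemble the pieces. Applying the covariance and localization with $\lk=\m+\je$ gives $E^{\m+\je}(z,B_{t,-c}s)=E_c^{\m+\je}(P_cB_{t,-c}^*z,s);$ inserting the tube-type identity above with $a=P_cB_{t,-c}^*z$ and $b=s$ produces the factor $\f{(d_2^c/\l)_\m}{(d_2^c/\l)_{\m+\je}}\,N_c(P_cB_{t,-c}^*z)\,\o{N_c(s)};$ and running the localization and covariance backwards with $\lk=\m$ rewrites the surviving $E_c^\m(P_cB_{t,-c}^*z,s)$ as $E^\m(z,B_{t,-c}s),$ which is exactly the right-hand side of the assertion. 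The only genuinely delicate point is the localization $E^\lk(x,s)=E^\lk(P_cx,s);$ the covariance and the restriction of the Fischer--Fock kernel to a Peirce $2$-subtriple are routine.
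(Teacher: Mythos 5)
Your proof is correct and follows essentially the same route as the paper's: the covariance $E^\lk(z,B_{t,-c}s)=E^\lk(B_{t,-c}^*z,s)$, the Peirce localization $E^\lk(B_{t,-c}^*z,s)=E_c^\lk(P_cB_{t,-c}^*z,s)$, and then Lemma \ref{g} applied to the tube-type Peirce $2$-space $V_2^c$ with $d/r$ replaced by $d_2^c/\l$. The only difference is that you supply justifications (analytic continuation from $K$ to $K^\Cl$, circle-averaging with $e^{\x i\lt(D(c,c)-2)}$, and the restriction property of the Fischer--Fock expansion) for steps the paper simply asserts.
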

\begin{proof} Applying Lemma \ref{g} to the tube type Peirce 2-space $V_2^c$ of rank $\l$ implies
$$E^{\m+\je}(z,B_{t,-c}s)=E^{\m+\je}(B_{t,-c}^*z,s)=E_c^{\m+\je}(P_cB_{t,-c}^*z,s)$$
$$=\f{(d_2^c/\l)_\m}{(d_2^c/\l)_{\m+\je}}\ N_c(P_cB_{t,-c}^*z)\ \o{N_c(s)}\ E_c^\m(P_cB_{t,-c}^*z,s).$$
Since $E_c^\m(P_cB_{t,-c}^*z,s)=E^\m(B_{t,-c}^*z,s)=E^\m(z,B_{t,-c}s),$ the assertion follows.
\end{proof}

Since the truncated kernel $\wt\KL$ of $\wt\ML$ vanishes on the singular set $V_{\l-1}$ it cannot be used directly to define a hermitian line bundle over $V_{\l-1}.$ Instead, we first consider the module tensor product of $H_0^2(\lO_\l)$ over the polynomial ring $\PL(V)$  with the one dimensional module $\Cl_w$, $(p,w)\mapsto p(w).$ Similar as in \er{8} we have, as a consequence of \er{16} 
$$H_0^2(\lO_\l)\xt_{\PL(V)}\Cl_w=\begin{cases}\Cl &\mbox{if~}w\in\r\lO_\l\\\PL_\je(V)&\mbox{if~}w\in\lO_{\l-1}\end{cases}.$$
Here $\PL_\je(V)$ is the finite-dimensional $K$-module belonging to the partition $\je.$ The $K$-module $\PL_\je(V)$ has dimension $>1$ (since we exclude the case $\l=r$ for tube type, where $\PL_\je(V)$ is spanned by the Jordan algebra polynomial $N$). The ideal $\IL$ associated to the variety $V_{\l-1}$ is generated by $\PL_\je(V).$ For each $w\in\lO_\l$ there is a 'cross-section' $\PL_\je(V)\to H_0^2(\lO_\l)$ given by
$$p(z)\mapsto p(z)\.\lQ_w(z)$$ 
where
\be{15}\lQ(z,w)=\h\KL_w(z)=\S_{\m\in\Nl_+^\l}\f{(d/r)_{\m+\je}}{\lr_{\m+\je}}\ \f{(ra/2)_{\m+1}}{(\l a/2)_{\m+\je}}
\ \f{(d_2^c/\l)_\m}{(d_2^c/\l)_{\m+\je}}\ E^\m(z,w).\ee
Then $\lQ_w(z)\in\ML$ for each $w\in\lO_\l.$ Let $N_i,\ i\in I$ be an orthonormal basis of $\PL_\je(V).$ Then there is a holomorphic vector subbundle $\EL\ic\lO_\l\xx\ML$ over the Kepler ball $\lO_\l,$ whose fibre at $w\in V_\l$ is the span
$$\EL_w:=\<N_i(z)\ \lQ_w(z):\ i\in I\>=\PL_\je(V)\.\lQ_w\ic\ML.$$
The vector bundle $\EL$ is independent of the choice of orthonormal basis $N_i.$ Consider the pull-back vector bundle 
$$\xymatrix{\lp^*\EL\ar[d]&\EL\ar[d]\\\h\lO_\l\ar[r]_{\lp}&\lO_\l}$$ 
over $\h\lO_\l,$ under the collapsing map $\lp.$
We note that the 'canonical' choice of higher rank vector bundle $\EL$ over $\lO_\l,$ with typical fibre $\PL_\je(V)$ associated with the quotient module, is only possible for irreducible domains. In the reducible case \er{2} of the bidisk there is no natural choice of a rank 2 vector bundle having the fibre $<z_1,z_2>$ at the origin.

\begin{proposition}\label{d} For all $(s,t)\in V_2^c\op V_1^c$ we have
$$\wt\KL(z,B_{t,-c}s)=N_c(P_cB_{t,-c}^*z)\ \o{N_c(s)}\ \lQ(z,B_{t,-c}s).$$
\end{proposition}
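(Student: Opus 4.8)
The plan is to compute both sides of the claimed identity by summing their defining series over partitions $\m\in\Nl_+^\l$ and matching coefficients of $E^\m(z,B_{t,-c}s)$ term by term. The truncated kernel is given by the series \er{16}, and I would substitute $w=B_{t,-c}s$ into it, so that the left-hand side becomes
$$\wt\KL(z,B_{t,-c}s)=\S_{\m\in\Nl_+^\l}\f{(d/r)_{\m+\je}}{\lr_{\m+\je}}\ \f{(ra/2)_{\m+1}}{(\l a/2)_{\m+\je}}\ E^{\m+\je}(z,B_{t,-c}s).$$
The key input is the preceding lemma, which evaluates the summand $E^{\m+\je}(z,B_{t,-c}s)$ and factors out exactly the geometric prefactor $N_c(P_cB_{t,-c}^*z)\,\o{N_c(s)}$ times a ratio of Pochhammer symbols $(d_2^c/\l)_\m/(d_2^c/\l)_{\m+\je}$ and a lower term $E^\m(z,B_{t,-c}s)$.

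First I would invoke that lemma to replace $E^{\m+\je}(z,B_{t,-c}s)$ inside the sum. The factor $N_c(P_cB_{t,-c}^*z)\,\o{N_c(s)}$ is independent of $\m$, so it pulls out of the summation entirely, leaving
$$\wt\KL(z,B_{t,-c}s)=N_c(P_cB_{t,-c}^*z)\ \o{N_c(s)}\ \S_{\m\in\Nl_+^\l}\f{(d/r)_{\m+\je}}{\lr_{\m+\je}}\ \f{(ra/2)_{\m+1}}{(\l a/2)_{\m+\je}}\ \f{(d_2^c/\l)_\m}{(d_2^c/\l)_{\m+\je}}\ E^\m(z,B_{t,-c}s).$$
The remaining step is to recognize the residual sum as precisely $\lQ(z,B_{t,-c}s)$: comparing with the series \er{15} defining $\lQ$, the coefficients match verbatim, since \er{15} carries exactly the same product $\f{(d/r)_{\m+\je}}{\lr_{\m+\je}}\,\f{(ra/2)_{\m+1}}{(\l a/2)_{\m+\je}}\,\f{(d_2^c/\l)_\m}{(d_2^c/\l)_{\m+\je}}$ against $E^\m(z,w)$ with $w=B_{t,-c}s$. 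This finishes the proof.

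The one point requiring care is that $d_2^c=\dim V_2^c$ is the rank-$\l$ tube-type dimension appearing in the earlier lemma, and I must confirm it is the same $d_2^c$ used in the definition \er{15} of $\lQ$; this is consistent since $\PL_\je(V)$ and the Peirce 2-space $V_2^c$ are both governed by the rank-$\l$ tripotent $c$. I would also note the implicit convergence and the fact that $N_c(s)\ne0$ is guaranteed on the chart where $B_{t,-c}s$ has full rank-$\l$ Peirce 2-space, so that the identity holds as an equality of holomorphic functions on the relevant chart. I do not anticipate a genuine obstacle here: the entire content is packaged into the preceding lemma, and the proposition is essentially a resummation once that lemma is applied and the $\m$-independent Jordan determinant factors are extracted.
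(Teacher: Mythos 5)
Your proposal is correct and follows exactly the paper's own argument: substitute $w=B_{t,-c}s$ into the series \er{16}, apply the preceding lemma to each term $E^{\m+\je}(z,B_{t,-c}s)$, pull the $\m$-independent factor $N_c(P_cB_{t,-c}^*z)\,\o{N_c(s)}$ out of the sum, and identify the remaining series with \er{15}. No difference in approach and no gaps.
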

\begin{proof} This follows from the computation
$$\wt\KL(z,B_{t,-c}s)=\S_{\m\in\Nl_+^\l}\f{(d/r)_{\m+\je}}{\lr_{\m+\je}}\ \f{(ra/2)_{\m+1}}{(\l a/2)_{\m+\je}}\ E^{\m+\je}(z,B_{t,-c}s)$$
$$=\S_{\m\in\Nl_+^\l}\f{(d/r)_{\m+\je}}{\lr_{\m+\je}}\ \f{(ra/2)_{\m+1}}{(\l a/2)_{\m+\je}}
\ \f{(d_2^c/\l)_\m}{(d_2^c/\l)_{\m+\je}}\ N_c(P_cB_{t,-c}^*z)\ \o{N_c(s)}\ E^\m(z,B_{t,-c}s)$$
$$=N_c(P_cB_{t,-c}^*z)\ \o{N_c(s)}\S_{\m\in\Nl_+^\l}\f{(d/r)_{\m+\je}}{\lr_{\m+\je}}\ \f{(ra/2)_{\m+1}}{(\l a/2)_{\m+\je}}
\ \f{(d_2^c/\l)_\m}{(d_2^c/\l)_{\m+\je}}\ E^\m(z,B_{t,-c}s).$$
\end{proof}

Now consider the holomorphic line bundle $\h\LL$ over the blow-up space $\h\lO_\l$ defined in Proposition \ref{c}.

\begin{theorem} There exists an anti-holomorphic embedding $\h\LL\ic\lp^*\EL,$ defined on each fibre 
$\h\LL_{w,U}\ic(\lp^*\EL)_{w,U}=\EL_w$ by
\be{9}[s,t,1]_c\mapsto N_c(B_{t,-c}^*z)\ \lQ_{B_{t,-c}s}(z).\ee
In short,
$$[s,t,1]_c\mapsto N_c\oc B_{t,-c}^*\ \lQ_{B_{t,-c}s}.$$
\end{theorem}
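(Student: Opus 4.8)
The plan is to construct the bundle map $\lF:\h\LL\ti\lp^*\EL$ one chart at a time from the formula \er{9}, extended $\Cl$-linearly in the fibre coordinate, and then to reduce the two substantive points — that each local formula takes values in the correct fibre, and that the local formulas agree on chart overlaps — to Proposition \ref{d} together with a representation-theoretic identification of the scalar prefactor. Throughout I would work in the charts $\lr_c$ of \er{10}, so that a point of $\h\lO_\l$ is $\lr_c(s,t)=(\ls_c(s,t),\Lt_c(t))$ with base point $w:=\ls_c(s,t)=B_{t,-c}s$ by \er{19}, and trivialize $\h\LL$ over $\lr_c$ by means of Proposition \ref{c}.

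The first step is to check that the target of \er{9} lies in $(\lp^*\EL)_{w,U}=\EL_w=\PL_\je(V)\.\lQ_w.$ As $\lQ_w$ is a fixed vector, this reduces to showing $z\mapsto N_c(P_cB_{t,-c}^*z)\in\PL_\je(V).$ Here I would use that the rank-$\l$ minors $z\mapsto N_c(P_cz),$ $c\in S_\l,$ lie in (and span) the irreducible $K$-module $\PL_\je(V)$ generating the vanishing ideal of $V_{\l-1};$ in particular $N_c\oc P_c\in\PL_\je(V).$ Since $B_{t,-c}\in K^\Cl$ and $K^\Cl$ is stable under the adjoint, $B_{t,-c}^*\in K^\Cl,$ and because $\PL_\je(V)$ is invariant under the holomorphic extension of the $K$-action, composing with $B_{t,-c}^*$ keeps the polynomial inside $\PL_\je(V).$ Hence \er{9} indeed takes values in $\EL_w.$

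The heart of the proof is compatibility on overlaps, which I expect to be the main obstacle. Suppose $(s,t)\in V_2^c\xx V_1^c$ and $(s',t')\in V_2^{c'}\xx V_1^{c'}$ satisfy \er{3}, so that they name the same point of $\h\lO_\l;$ then $w=\ls_c(s,t)=\ls_{c'}(s',t')$ and both recipes are multiples of the same vector $\lQ_w.$ Evaluating Proposition \ref{d} in each chart expresses $\wt\KL(\.,w)$ both as $N_c(P_cB_{t,-c}^*\,\.)\,\o{N_c(s)}\,\lQ(\.,w)$ and as $N_{c'}(P_{c'}B_{t',-c'}^*\,\.)\,\o{N_{c'}(s')}\,\lQ(\.,w);$ since $\lQ(\.,w)\not\equiv0$ (its value at $z=0$ is the nonzero $\m=0$ summand of \er{15}) I may cancel it to obtain
$$\o{N_c(s)}\,N_c(P_cB_{t,-c}^*z)=\o{N_{c'}(s')}\,N_{c'}(P_{c'}B_{t',-c'}^*z).$$
This is exactly the cocycle $\o{N_{c'}(s')}/\o{N_c(s)}$ defining $\h\LL$ in Proposition \ref{c}, so the $\Cl$-linear local maps patch to a single $\lF.$ The difficulty is not any one computation but recognizing that the conjugations built into Proposition \ref{d} match precisely the conjugated transition function of $\h\LL;$ once this is seen, well-definedness is automatic.

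It remains to verify that $\lF$ is an anti-holomorphic embedding. The $\ll=1$ frame of $\h\LL$ in the chart $\lr_c$ is holomorphic in $(s,t),$ while its image $N_c(P_cB_{t,-c}^*z)\,\lQ_{B_{t,-c}s}(z)$ is anti-holomorphic in $(s,t)$ — because $\lQ(z,\.)$ is anti-holomorphic and $t\mapsto B_{t,-c}^*$ is anti-holomorphic — so $\lF$ reverses the complex structure and is an anti-holomorphic bundle map, consistent with $\wt\KL_w$ being anti-holomorphic in $w.$ Fibrewise injectivity is then immediate: $P_cB_{t,-c}^*$ maps $V$ onto $V_2^c$ and $N_c\not\equiv0,$ so $z\mapsto N_c(P_cB_{t,-c}^*z)$ is a nonzero polynomial, and $\lQ_w\ne0,$ whence $\lF$ is nonzero on each line $\h\LL_{w,U}.$ As a byproduct the canonical section $\ll=1$ of $\h\LL$ is carried to the truncated kernel $\wt\KL_w,$ realizing it as the reduction to the blow-up space announced in the introduction.
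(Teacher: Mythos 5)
Your proposal is correct and follows essentially the same route as the paper: well-definedness on chart overlaps is reduced to Proposition \ref{d} applied in both charts (identifying the resulting factor $\o{N_{c'}(s')}/\o{N_c(s)}$ with the transition cocycle of $\h\LL$), the range is placed in $\PL_\je(V)\.\lQ_w$ via the $K^\Cl$-invariance of $\PL_\je(V)$ (the paper instead expands $N_c\oc B_{t,-c}^*$ in an orthonormal basis of $\PL_\je(V)$, which is the same fact), and anti-holomorphy in $(s,t)$ is noted. Your explicit cancellation of $\lQ(\.,w)\ne0$ and the fibrewise injectivity check are small additions the paper leaves implicit.
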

\begin{proof} First we show that the map \er{9} is well-defined via the local charts \er{10}. Suppose that $c,c'\in S_\l$ satisfy
$$\lr_c(s,t)=\lr_{c'}(s',t'),$$
where $(s,t)\in V_2^c\xx V_1^c$ and $(s',t')\in V_2^{c'}\xx V_1^{c'}.$ Then we have
$$B_{t,-c}s=\ls_c(s,t)=\ls_{c'}(s',t')=B_{t',-c'}s'.$$
It follows that $\wt\KL_{B_{t,-c}s}=\wt\KL_{B_{t',-c'}s'}$ and Proposition \ref{d} implies
$$\o{N_c(s)}\ [s,t,1]_c=\wt\KL_{B_{t,-c}s}=\wt\KL_{B_{t',-c'}s'}=\o{N_{c'}(s')}\ [s',t',1]_{c'}.$$
Since $N_c(s)$ and $N_{c'}(s')$ don't vanish on the overlap $V_c\ui V_{c'}$ it follows that
$$[s,t,1]_c=\f{\o{N_{c'}(s')}}{\o{N_c(s)}}\ [s',t',1]_{c'}=\[s',t',\f{\o{N_{c'}(s')}}{\o{N_c(s)}}\]_{c'}.$$
Thus the map \er{9} respects the equivalence relation \er{11}. Moreover, the map \er{9} is anti-holomorphic in $(s,t),$ with values in 
$\ML.$ In order to see that the range belongs to the span of $N_i(z)\ \lQ_w(z),$ where $w=B_{t,-c}s,$ choose holomorphic functions 
$c_i(t)$ such that
$$N_c(B_{t,-c}^*z)=\S_{i\in I}\o{c_i(t)}\ N_i(z)$$
for all $t\in V_1^c.$ It follows that
$$N_c(B_{t,-c}^*z)\ \lQ_{B_{t,-c}s}(z)=\S_i N_i(z)\ \o{c_i(t)}\ \lQ(z,B_{t,-c}s)\in\EL_{B_{t,-c}s}.$$
\end{proof}

We are now able to define a {\bf hermitian metric} on the line bundle $\h\LL$ over $\h\lO_\l.$ A Jordan theoretic argument yields

\begin{lemma}\label{e} For $t\in V_1^c$ we have
$$P_cB_{t,-c}^*B_{t,-c}c=P_cB_{t,-t}c$$
and hence
$$N_c(B_{t,-c}^*B_{t,-c}c)=\lD(t,t).$$
Here $\lD$ denotes the Jordan triple determinant \er{12}.
\end{lemma}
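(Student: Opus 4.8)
The plan is to deduce the determinant identity from the first, vector-valued identity and to prove the latter by Peirce calculus relative to the tripotent $c$. For the reduction, observe first that for $t\in V_1^c$ the element $B_{t,-t}c$ already lies in $V_2^c$: writing $B_{t,-t}=I+D(t,t)+Q_tQ_t$ via \er{1} (using that $D$ is conjugate-linear and $Q$ quadratic in the subscript, so the sign of $-t$ disappears) and applying the Peirce multiplication rule $\{V_i^c;V_j^c;V_k^c\}\ic V_{i-j+k}^c$, each of the three summands carries $c\in V_2^c$ back into $V_2^c$ (here $Q_t$ interchanges $V_2^c$ and $V_0^c$ and preserves $V_1^c$). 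Hence $P_cB_{t,-t}c=B_{t,-t}c$, and once the first identity is known, applying the Jordan determinant $N_c$ of the rank-$\l$ tube $V_2^c$ gives $N_c(B_{t,-c}^*B_{t,-c}c)=N_c(B_{t,-t}c)$, so it remains only to identify this scalar with the value of $\lD$ at $t$.

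For the vector identity I would use the adjoint relation $B_{z,w}^*=B_{w,z}$, valid for the $K$-invariant inner product on $V$ because $D(z,w)^*=D(w,z)$; this turns the left-hand side into $B_{-c,t}B_{t,-c}c$. The computation then proceeds in two stages: first evaluate $w:=B_{t,-c}c$, and then apply $B_{-c,t}$ and project onto $V_2^c$. Both stages are governed by the Peirce rules together with the tripotent relations $Q_cc=c$, $Q_c|_{V_2^c}$ equal to the Peirce-$2$ involution, and $Q_cV_1^c=Q_cV_0^c=0$; since $t\in V_1^c$, the operators $D(t,c)$, $D(c,t)$, $Q_t$, $Q_c$ all have explicitly known block actions, and the $V_2^c$-component of $B_{-c,t}B_{t,-c}c$ collapses onto that of $B_{t,-t}c$. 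For the sequel only the image under $N_c$ of this equality is actually used, so it suffices to match the two Peirce-$2$ vectors up to the Jordan determinant.

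Finally, to evaluate the scalar $N_c(B_{t,-t}c)$ I would reduce to a transparent model. In the matrix triple $V=\Cl^{r\xx s}$ with $c=\bb{1_\l}000$ and $t=\bb0{t_{12}}{t_{21}}0\in V_1^c$, formula \er{13} gives $B_{t,-t}c$ explicitly as an element of the top-left $\l\xx\l$ block, whose Jordan determinant $N_c$ is computed by a Sylvester-type identity and reproduces the value \er{12} of $\lD$ at $t$. For a general irreducible $V$ one reaches the same conclusion by using $K_c$-invariance (the automorphisms fixing $c$) to bring $t$ into a Peirce grid-diagonal form, whereupon $V$ splits into rank-one pieces and the computation reduces to the rank-$1$ case. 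I expect the main obstacle to be the Peirce bookkeeping in the middle step---tracking which blocks survive under $B_{-c,t}B_{t,-c}$---together with the clean passage from the local determinant $N_c$ on $V_2^c$ to the global triple determinant $\lD$ on $V$.
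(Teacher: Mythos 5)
The paper offers no proof of this lemma at all --- it is introduced by the phrase ``A Jordan theoretic argument yields'' and no proof environment follows --- so there is nothing to compare your outline against; your plan (use $B_{z,w}^*=B_{w,z}$ for the $K$-invariant inner product, Peirce calculus relative to $c$, then evaluate the determinant in the matrix model) is surely the kind of argument the authors intend, and your preliminary observations are correct: $B_{t,-t}=I+D(t,t)+Q_t^2$ does carry $c$ into $V_2^c$, so $P_c$ may be dropped on the right-hand side.

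The gap is that the two steps carrying all the content are only described, and the one computation you commit to --- that in $V=\Cl^{r\xx s}$ the determinant of $B_{t,-t}c$ ``reproduces the value of $\lD$ at $t$'' --- does not come out as claimed when you actually do it. With $c=\bb{1_\l}000$ and $t=\bb0{t_{12}}{t_{21}}0\in V_1^c$, formula \er{13} gives, identifying $V_2^c$ with the upper left $\l\xx\l$ block,
$$P_cB_{t,-c}^*B_{t,-c}c=(1_\l+t_{21}^*t_{21})(1_\l+t_{12}t_{12}^*),\qquad P_cB_{t,-t}c=(1_\l+t_{12}t_{12}^*)(1_\l+t_{21}^*t_{21}).$$
These are $AB$ versus $BA$: they agree only after applying $N_c$, so your hedge ``up to the Jordan determinant'' is necessary rather than optional --- the first display of the lemma is not literally a vector identity for $\l\ge 2$. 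More seriously, $N_c$ of either side equals $\det(1_\l+t_{12}t_{12}^*)\det(1_\l+t_{21}^*t_{21})=\det(1_r+tt^*)=\lD(t,-t)$, whereas $\lD(t,t)=\det(1_r-tt^*)$. The rank-one case confirms the sign: $B_{t,-c}c=(1,t)$, hence $N_c(P_cB_{t,-c}^*B_{t,-c}c)=\|(1,t)\|^2=1+|t|^2$, while $\lD(t,t)=1-|t|^2$; and $1+|t|^2$ is the Fubini--Study type metric one expects on the exceptional fibre (compare the explicit blow-up computation in Section 1). So unless some sign convention reverses this, the conclusion should read $\lD(t,-t)$, and the assertion you leave unverified is precisely the one that fails as written: a complete proof must carry out this evaluation, not assert its outcome, and should flag the needed correction. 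Finally, your reduction of the general triple to the rank-one/matrix picture via $K^c$-invariance and grid diagonalization of $t\in V_1^c$ is plausible but is itself an unproved step requiring a polar decomposition of $V_1^c$ under the stabilizer of $c$.
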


\begin{proposition}\label{h} For all $(s,t)\in V_2^c\op V_1^c$ we have
$$\wt\KL(B_{t,-c}s,B_{t,-c}s)=\lD(t,t)\ |N_c(s)|^2\ \lQ(B_{t,-c}s,B_{t,-c}s).$$
\end{proposition}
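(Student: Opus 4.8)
The plan is to evaluate the general identity of Proposition \ref{d} along the diagonal, setting the holomorphic argument $z$ equal to the point $B_{t,-c}s$ itself. Proposition \ref{d} gives, for arbitrary $z$,
$$\wt\KL(z,B_{t,-c}s)=N_c(P_cB_{t,-c}^*z)\ \o{N_c(s)}\ \lQ(z,B_{t,-c}s),$$
so substituting $z=B_{t,-c}s$ immediately yields
$$\wt\KL(B_{t,-c}s,B_{t,-c}s)=N_c(P_cB_{t,-c}^*B_{t,-c}s)\ \o{N_c(s)}\ \lQ(B_{t,-c}s,B_{t,-c}s).$$
Thus everything reduces to computing the scalar factor $N_c(P_cB_{t,-c}^*B_{t,-c}s)$ and showing it equals $\lD(t,t)\ N_c(s),$ which when multiplied by $\o{N_c(s)}$ produces the desired $\lD(t,t)\ |N_c(s)|^2.$

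First I would reduce the factor $N_c(P_cB_{t,-c}^*B_{t,-c}s)$ to the case $s=c$ handled by Lemma \ref{e}. The key structural fact is that $B_{t,-c}^*B_{t,-c}$ is a positive operator commuting appropriately with the Peirce structure, and that the Jordan determinant $N_c$ on the tube-type Peirce 2-space $V_2^c$ is multiplicative in the sense $N_c(Q_a b)=N_c(a)^2\,N_c(b)$ (up to normalization by the unit $c$). Since $s\in V_2^c$ and $c$ is the unit of this Jordan algebra, I expect $P_cB_{t,-c}^*B_{t,-c}$ to act on $V_2^c$ as a quadratic-representation-type operator built from $P_cB_{t,-c}^*B_{t,-c}c,$ allowing one to factor
$$N_c(P_cB_{t,-c}^*B_{t,-c}s)=N_c(P_cB_{t,-c}^*B_{t,-c}c)\ N_c(s).$$
By Lemma \ref{e} the first factor is exactly $\lD(t,t),$ and substituting gives the claim.

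The main obstacle will be justifying the factorization $N_c(P_cB_{t,-c}^*B_{t,-c}s)=N_c(P_cB_{t,-c}^*B_{t,-c}c)\,N_c(s)$ rigorously. This is not a formal consequence of multiplicativity of $N_c$ alone, because $P_cB_{t,-c}^*B_{t,-c}$ need not be the quadratic representation of a single Jordan algebra element; one must verify that its restriction to $V_2^c$ is a Jordan-algebra automorphism composed with multiplication by the element $g:=P_cB_{t,-c}^*B_{t,-c}c.$ The cleanest route is to use $K$-invariance: since $B_{t,-c}^*B_{t,-c}\in K^\Cl$ preserves rank and the Peirce decomposition relative to $c$ is compatible with the Jordan-algebra structure on $V_2^c,$ the induced map on $V_2^c$ respects the determinant up to the scalar $N_c(g),$ which is precisely the transformation rule $N_c(Uv)=(\det U)\,N_c(v)$ for determinant-preserving-up-to-scalar operators. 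Once this homogeneity of degree $\l$ together with the evaluation at $s=c$ from Lemma \ref{e} is in hand, the diagonal specialization of Proposition \ref{d} completes the proof.
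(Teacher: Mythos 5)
Your proof is correct and is essentially the paper's own argument: the paper likewise specializes Proposition \ref{d} to $z=B_{t,-c}s$ and justifies the factorization $N_c(P_cB_{t,-c}^*B_{t,-c}s)=N_c(P_cB_{t,-c}^*B_{t,-c}c)\,N_c(s)=\lD(t,t)\,N_c(s)$ by noting that $P_cB_{t,-c}^*B_{t,-c}P_c$ belongs to the structure group of $V_2^c$, then applies Lemma \ref{e}. The only slip is the normalization in your stated transformation rule: for a structure-group element $U$ of the rank-$\l$ Jordan algebra $V_2^c$ the correct identity is $N_c(Uv)=N_c(Uc)\,N_c(v)$, with $\det U$ a fixed power of $N_c(Uc)$ rather than equal to it --- but the constant you actually use in the factorization is $N_c(g)=N_c(Uc)$, so the argument goes through unchanged.
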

\begin{proof} Since $P_cB_{t,-c}B_t^*P_c$ belongs to the structure group of $V_c^2$ it follows from Lemma \ref{e} that
$$=N_c(B_{t,-c}^*B_{t,-c}s)=N_c(B_{t,-c}^*B_{t,-c}c)\ N_c(s)=\lD(t,t)\ N_c(s).$$
Now apply Proposition \ref{d}.
\end{proof}

\begin{proposition} For each submodule $\wt\ML\ic\ML,$ with truncated kernel \er{16}, there exists a hermitian metric on the line bundle 
$\h\LL$ over $\h\lO_\l,$ given by the local representatives
$$([s,t,1]_c|[s,t,1]_c):=\lD(t,t)\ \lQ(B_{t,-c}s,B_{t,-c}s).$$
For this metric, the embedding \er{9} is isometric.
\end{proposition}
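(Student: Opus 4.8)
The plan is to avoid checking the cocycle condition by hand and instead to recognise the proposed formula as the pull-back, under the embedding \er{9}, of the ambient inner product of $\ML$. The Theorem establishing \er{9} shows it to be a well-defined, fibrewise injective, conjugate-linear map of line bundles, so such a pull-back is automatically a chart-independent, positive-definite Hermitian metric; the isometry assertion then holds by construction, and the only real work is to evaluate the pulled-back norm in the local charts \er{10}.

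First I would compute the $\ML$-norm of the image of the local frame $[s,t,1]_c$. Write $w:=B_{t,-c}s$ and let $\iota$ denote the map \er{9}, so that $\iota[s,t,1]_c=N_c(B_{t,-c}^*z)\,\lQ_w(z)$. Because $N_c$ depends only on the $V_2^c$-component, $N_c(P_cB_{t,-c}^*z)=N_c(B_{t,-c}^*z)$, and Proposition \ref{d} becomes $\wt\KL_w=N_c(B_{t,-c}^*z)\,\o{N_c(s)}\,\lQ_w$. Hence on the dense open set where $N_c(s)\ne0$ (that is, $w\in\r\lO_\l$) we have $\iota[s,t,1]_c=\o{N_c(s)}^{-1}\,\wt\KL_w$, and the reproducing property together with Proposition \ref{h} gives
$$\big\|\iota[s,t,1]_c\big\|^2_\ML=\f{\wt\KL(w,w)}{|N_c(s)|^2}=\f{\lD(t,t)\,|N_c(s)|^2\,\lQ(w,w)}{|N_c(s)|^2}=\lD(t,t)\,\lQ(B_{t,-c}s,B_{t,-c}s),$$
which is exactly the proposed local representative.

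Next I would extract the conclusions. The function $\xi\mapsto\|\iota(\xi)\|^2_\ML$ on $\h\LL$ is manifestly independent of the chart $c$, since $\iota$ is globally well defined; being the norm of a fibrewise injective, conjugate-linear map into the Hermitian bundle $\lp^*\EL\ic\h\lO_\l\xx\ML$, it is a genuine positive-definite Hermitian norm, whose associated inner product is recovered by polarisation. The previous display identifies this intrinsic norm, in the chart $\lr_c$, with $\lD(t,t)\,\lQ(B_{t,-c}s,B_{t,-c}s)$; both sides being real-analytic in $(s,t)$, the identity propagates from $\{N_c(s)\ne0\}$ to the whole chart $V_2^c\xx V_1^c$, in particular across the exceptional fibre $\{s=0\}$. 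Thus the stated formula does define a Hermitian metric on $\h\LL$ — the transition relation $[s,t,1]_c=\o{N_{c'}(s')}\,\o{N_c(s)}^{-1}[s',t',1]_{c'}$ under \er{3} being automatically respected, with no separate cocycle computation needed — and for this metric $\iota$ is isometric by construction.

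The one place calling for care is positive-definiteness on the exceptional fibre, where the identification $\iota[s,t,1]_c=\o{N_c(s)}^{-1}\wt\KL_w$ breaks down because $\wt\KL_w$ itself vanishes on $\lO_{\l-1}$. I would instead read positivity off the two factors directly: $\lQ(w,w)>0$ because the coefficients in \er{15} and the kernels $E^\m$ are positive (the $\m=0$ term already being a positive constant), while $\lD(t,t)=N_c(B_{t,-c}^*B_{t,-c}c)>0$ by Lemma \ref{e}, this being the Jordan determinant of an element of the symmetric cone of $V_2^c$. Equivalently, $\iota[s,t,1]_c$ is the product of the non-zero polynomial $z\mapsto N_c(B_{t,-c}^*z)$ with the non-zero vector $\lQ_w$, hence non-zero in $\ML$. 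This is the heart of the matter: the quotient $\o{N_c(s)}^{-1}\wt\KL_w$ degenerates along the singular set, but its regularisation $\lD(t,t)\,\lQ(w,w)$ stays finite and strictly positive — which is precisely what the higher-rank blow-up was designed to achieve.
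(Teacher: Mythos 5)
Your proposal is correct and follows essentially the same route as the paper: the paper's proof likewise computes the $\ML$-norm of the image of the local frame under the embedding \er{9}, writing it as $\|\wt\KL_{B_{t,-c}s}/\o{N_c(s)}\|^2$ and invoking Proposition \ref{h} together with the reproducing property to obtain $\lD(t,t)\,\lQ(B_{t,-c}s,B_{t,-c}s)$. Your extra care at the exceptional fibre (analytic continuation across $\{N_c(s)=0\}$ and the direct positivity check) and the remark that chart-independence is inherited from the globally defined embedding merely make explicit points the paper leaves implicit.
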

\begin{proof} Since Proposition \ref{h} implies
$$\|N_c(B_{t,-c}^*z)\ \lQ_{B_{t,-c}s}(z)\|^2=\|\f{\wt\KL_{B_{t,-c}s}}{\o{N_c(s)}}\|^2=\f1{|N_c(s)|^2}\ \wt\KL(B_{t,-c}s,B_{t,-c}s)
=\lD(t,t)\ \lQ(B_{t,-c}s,B_{t,-c}s)$$
it follows that the embedding \er{9} is isometric.
\end{proof}

\begin{definition} The Hilbert module over $\h\lO_\l$ associated with the hermitian holomorphic line bundle $\h\LL$ will be called the 
{\bf reduction} of $\wt\ML,$ and denoted by $\h\ML.$ Note that this is different from the pull-back $\lp^*\EL$ which is a vector bundle containing $\h\LL$ as a subbundle.
\end{definition}

The following {\em rigidity theorem for singular submodules on Kepler varieties} is our main analytic result.

\begin{theorem} Consider two $K$-invariant Hilbert modules $\wt\ML_\lr$ and $\wt\ML_{\lr'}$ on $\lO_\l,$ for given coefficient sequences 
$\lr_\m$ and $\lr_\m',$ respectively. Suppose that the reduced Hilbert modules $\h\ML_\lr$ and $\h\ML_{\lr'}$ on the blow-up space 
$\h\lO_\l$ are equivalent. Then we have equality $\wt\ML_\lr=\wt\ML_{\lr'}.$
\end{theorem}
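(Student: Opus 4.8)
The plan is to convert the assumed equivalence of the two reductions into an equality of curvatures, and then to exploit the explicit Jordan-theoretic form of the kernels $\lQ_\lr$ in \er{15} to force the coefficient sequences, restricted to partitions of length $\l$, to coincide. First I would invoke Proposition \ref{a}. The reductions $\h\ML_\lr$ and $\h\ML_{\lr'}$ are scalar ($n=1$) Hilbert modules attached to the \emph{same} holomorphic line bundle $\h\LL$ of Proposition \ref{c}, carrying the two hermitian metrics coming from $\wt\ML_\lr$ and $\wt\ML_{\lr'}$; hence their equivalence is the same as local equivalence of these two hermitian holomorphic line bundles, i.e. equality of their curvatures. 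In the chart $\lr_c$ the metric of $\h\LL_\lr$ has local representative $\lD(t,t)\,\lQ_\lr(B_{t,-c}s,B_{t,-c}s)$, so its curvature is $-\dl\o\dl$ of the logarithm. The factor $\lD(t,t)$ is independent of $\lr,$ hence cancels, and equality of curvatures reads
$$\dl\o\dl\log\f{\lQ_\lr(B_{t,-c}s,B_{t,-c}s)}{\lQ_{\lr'}(B_{t,-c}s,B_{t,-c}s)}=0.$$
Since $(s,t)\mapsto B_{t,-c}s$ is a local biholomorphism onto an open piece of $\r\lO_\l$ and the charts $\lr_c$ ($c\in S_\l$) cover $\h\lO_\l,$ this means that $u(w):=\log\big(\lQ_\lr(w,w)/\lQ_{\lr'}(w,w)\big)$ is pluriharmonic on $\r\lO_\l.$

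Next I would globalize and polarize. Both $\lQ_\lr(w,w)$ and $\lQ_{\lr'}(w,w)$ are strictly positive real-analytic functions on all of $\lO_\l$ (restrictions of the convergent series \er{15}), so $u$ is pluriharmonic on $\lO_\l;$ as $\lO_\l$ is star-shaped about $0,$ hence simply connected, there is a single-valued holomorphic $\lf$ on $\lO_\l$ with $\lQ_\lr(w,w)=|\lf(w)|^2\,\lQ_{\lr'}(w,w),$ the second Riemann extension theorem (valid because $V_\l$ is normal) allowing $\lf$ to cross the singular set $\lO_{\l-1}.$ Now $\lQ_\lr(z,w)$ and $\lf(z)\,\o{\lf(w)}\,\lQ_{\lr'}(z,w)$ are both sesqui-holomorphic on $\lO_\l\xx\lO_\l$ and agree on the diagonal, so by the diagonal uniqueness principle for sesqui-holomorphic kernels they agree everywhere:
$$\lQ_\lr(z,w)=\lf(z)\,\o{\lf(w)}\,\lQ_{\lr'}(z,w).$$

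The decisive point is that $K$-invariance forces $\lf$ to be constant. Each kernel is $K$-invariant, $\lQ_\lr(kz,kw)=\lQ_\lr(z,w)$ for $k\in K$ (and likewise for $\lr'$), since the $E^\m$ are $K$-invariant; substituting into the displayed factorization gives $\lf(kz)\,\o{\lf(kw)}=\lf(z)\,\o{\lf(w)}.$ Fixing $w$ with $\lf(w)\ne 0$ shows $\lf(kz)=\lc(k)\,\lf(z)$ for a factor $\lc(k)$ independent of $z,$ and evaluating at $z=0$ (where $\lf(0)\ne 0,$ because the constant term of $\lQ_\lr$ is positive) yields $\lc(k)=1.$ Thus $\lf$ is a $K$-invariant holomorphic function; expanding it into $K$-isotypic components $\lf=\S_\m\lf_\m$ with $\lf_\m\in\PL_\m(V),$ and using that every $\PL_\m(V)$ with $\m\ne 0$ is a nontrivial irreducible $K$-module with no invariant vector, forces $\lf\equiv\lg$ to be constant. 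Hence $\lQ_\lr=|\lg|^2\,\lQ_{\lr'},$ and comparing coefficients of the linearly independent $E^\m$ in \er{15} gives equality of the coefficients up to the factor $|\lg|^2.$ Since the only $\lr$-dependence of these coefficients is the factor $1/\lr_{\m+\je},$ this is exactly $\lr_{\m+\je}=|\lg|^{-2}\,\lr'_{\m+\je}$ for all $\m\in\Nl_+^\l.$ As the inner product \er{4} of the submodule $\wt\ML_\lr$ involves precisely the coefficients $\lr_{\m+\je}$ indexed by partitions of length $\l,$ the two submodules consist of the same holomorphic functions with proportional norms, so $\wt\ML_\lr=\wt\ML_{\lr'}$ (the overall positive scalar $|\lg|^{-2}$ is the unavoidable ambiguity of the curvature, absorbed in the identification of modules up to a constant multiple of the inner product).

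I expect the main obstacle to be the step from the diagonal curvature identity to the global factorization $\lQ_\lr=\lf\,\o\lf\,\lQ_{\lr'}$: one must manufacture a single-valued holomorphic $\lf$ across the non-complete-intersection singular set $\lO_{\l-1},$ for which the normality of $V_\l$ and the contractibility of $\lO_\l$ are essential, and then deduce constancy of $\lf$ purely from the branching of $\PL(V)$ under $K.$ The remaining comparison of Pochhammer coefficients is routine once the factorization and the constancy of $\lf$ are established.
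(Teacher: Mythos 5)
Your argument arrives at essentially the right conclusion, but by a genuinely different and much longer route than the paper. The paper's proof is the direct form of the ``normalized kernel argument'' (item (4) of Proposition \ref{a} rather than item (3)): module equivalence of the reductions hands you a non-vanishing holomorphic $\lf$ on a chart $V_2^c\xx V_1^c$ with $\h\KL^{\lr'}(x,y)=\lf(x)\,\h\KL^\lr(x,y)\,\o{\lf(y)}$, and evaluating at $y=0$, where the reduced kernels are normalized to $1$, immediately forces $\lf$ to be a unimodular constant; the two reduced kernels are then literally equal, and comparing the expansion \er{15} gives $\lr_{\m+\je}=\lr'_{\m+\je}$, hence equality of the truncated kernels \er{16}. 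You instead pass through curvature equality, pluriharmonicity of $\log(\lQ_\lr/\lQ_{\lr'})$, extension across $\lO_{\l-1}$ via normality, polarization off the diagonal, and a $K$-isotypic decomposition to force your intertwiner to be constant. These steps are all workable (star-shapedness of $\lO_\l$ gives the simple connectivity you need, and in fact $K$-invariance of $\lf$ already follows from putting $w=0$ in your cocycle identity, making the isotypic argument shorter than you suggest), but none of them are necessary once one uses the intertwining function supplied directly by module equivalence. The one substantive difference in output is the scalar: curvature sees the metric only up to $|\lf|^2$, so your route yields $\lr_{\m+\je}=c\,\lr'_{\m+\je}$ for an undetermined $c>0$, i.e.\ equality of the submodules only up to a constant rescaling of the inner product, whereas the paper's evaluation at the base point pins $c=1$ and delivers the literal equality $\wt\ML_\lr=\wt\ML_{\lr'}$ asserted in the theorem. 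To close that gap along your route you would need to supplement the curvature comparison with a normalization of the reduced kernels at a base point --- which is exactly the step your approach bypasses and the paper's approach is built on.
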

\begin{proof} The proof is an application of the 'normalized kernel argument' summarized in Proposition \ref{a}. Consider the reproducing kernels $\h\KL^\lr$ and $\h\KL^{\lr'}$ of the reduced Hilbert modules. It suffices to consider a local chart 
$V_2^c\xx V_1^c$ of $\h\lO_\l$ for a given tripotent $c\in S_\l$ defined in \er{10}. As a consequence of module equivalence for line bundles, there exists a non-vanishing holomorphic function $\lf$ on the local chart $V_2^c\xx V_1^c$ of $\h\lO_\l$ such that
\be{14}\h\KL^{\lr'}(x,y)=\lf(x)\ \h\KL^\lr(x,y)\ \o{\lf(y)}.\ee
Putting $y=0$ we obtain
$$1=\h\KL^{\lr'}(x,0)=\lf(x)\ \h\KL^\lr(x,0)\ \o{\lf(0)}=\lf(x)\ \o{\lf(0)}.$$
Therefore $\lf$ is constant. After normalization, we may assume $\lf=1.$ Then \er{14} implies
$$\h\KL^{\lr'}(x,y)=\h\KL^\lr(x,y)$$
for all $x,y.$ In view of \er{15}, this implies $\lr_{\m+\je}=\lr_{\m+\je}'$ for all $\m\in\Nl_+^\l.$ By \er{16}, the singular submodules $\wt\ML$ and $\wt\ML'$ have the same truncated kernel $\wt\KL(z,w)=\wt\KL'(z,w).$
\end{proof}

\section{Outlook and Concluding Remarks}
For the Hardy module $H^2(\Dl^d)$ it is evident that not all submodules are of the form $[\IL],$ for some ideal $\IL$ of the polynomial ring. (Here $[\IL]$ is the closure of $\IL$ in $H^2(\Dl^d)$). Ahern and Clark \cite{AC} show that all submodules (of the Hardy module) of finite codimension are of this form. In general, if a submodule $\wt\ML\subseteq\ML$ is not of the form $[\IL]$, then it is not covered by the known Rigidity theorems with only one exception, namely \cite[Theorem, pp. 70]{DPY}. 
However, the geometric invariants constructed in \cite{SBisM} and in the current paper, it is hoped, might be useful in studying a much larger class of submodules. Recall that a submodule of an analytic Hilbert module $\ML$ based on the domain $\lO$ defines a coherent analytic sheaf \cite{BMP,SBisM}. It possesses a Hermitian structure away from the zero variety and on this smaller open set, we have a holomorphic Hermitian vector bundle, which determines the class of the submodule. What we have shown here is that it has an analytic Hermitian continuation to the blow-up space. This interesting phenomenon naturally leads to the notion of, what one may call a Hermitian sheaf and eventually determine the equivalence class of these in terms of the geometric data already implicit in the definition, as in the examples we have discussed here.

We conclude this paper with several remarks concerning interesting directions for future research

\begin{remark} In \cite{MU2} we consider more general Hilbert modules related to Kepler varieties, where the integration does not take place on the Kepler ball $\lO_\l$ but on certain boundary strata, including the Hardy type inner product \er{5}. These Hilbert modules, and their submodules defined by a vanishing condition on $\lO_{\l-1}$ provide a wider class of natural examples to which the above treatment is applicable.   
\end{remark}

\begin{remark} It is easy to generalize the singular Hilbert modules treated in this paper, defined by a vanishing condition of order 1 on the singular set, to vanishing conditions of higher order. In this case the truncated kernel, generalizing \er{16}, has the form
$$\wt\KL(z,w)=\S_{\m\in\Nl_+^\l}\f{(d/r)_{\m+\k}}{\lr_{\m+\k}}\ \f{(ra/2)_{\m+\k}}{(\l a/2)_{\m+\k}}\ E^{\m+\k}(z,w),$$
corresponding to vanishing of order $\ge k$ on $V_{\l-1}.$ Here $\k=(k,\;,k,0,\;,0)$ with $k$ repeated $\l$ times. In principle, one could also start with an arbitrary partition $\lm>0$ of length $\l$ and consider truncations such as
$$\wt\KL(z,w)=\S_{\n\in\Nl_+^\l,\ \n\ge\lm}\f{(d/r)_\n}{\lr_\n}\ \f{(ra/2)_\n}{(\l a/2)_\n}\ E^\n(z,w).$$
In this case one expects to have the finite-dimensional $K$-module $\PL_\lm(V)$ occurring as a quotient module. On the other hand, treating singularities where the rank decreases by more than 1, for example $V_{\l-2}\ic V_\l,$ or the origin $V_0=\{0\}$ as a singularity in $\lO=\lO_r,$ seems to be more difficult.
\end{remark}

\begin{remark} In the maximal rank case $\l=r$ the ball $\lO_r=\lO$ is invariant under the full non-linear group $G.$ For tube type domains, the singular set $\lO_{r-1}$ has codimension 1, defined by vanishing of the Jordan algebra determinant. This case formally resembles the one-dimensional situation and is not covered by our approach (it was excluded to begin with). On the other hand, let $V$ be a hermitian Jordan triple not of tube type. There are three cases
\bi
\i{} The rectangular matrices $V=\Cl^{r\xx s}$ with $s>r.$
\i{} The skew-symmetric matrices $V=\Cl_{asym}^{N\xx N}$ of odd order $N=2r+1$
\i{} The exceptional Jordan triple $V=\Ol_\Cl^{1\xx 2}$ of rank $r=2$ and dimension $16.$ 
\ei
For these cases the singular set 
$$V_{r-1}=\{z\in V:\ \x{rank}(z)<r\}$$
has codimension $>1.$ The intersection
$$\lO_{r-1}:=V_{r-1}\ui\lO$$
with the unit ball $\lO\ic V$ is an analytic subvariety of $\lO.$ For any automorphism $g\in G=\x{Aut}(\lO)$ we obtain another subvariety $g(\lO_{r-1})\ic\lO.$ Since $G$ acts on the weighted Bergman spaces $\ML_\ln=H_\ln^2(\lO)$ one can consider submodules of 
$\ML_\ln$ defined by vanishing on $\lO_{r-1}$ and $g(\lO_{r-1}),$ respectively, where $g\in G$ does not belong to $K.$ 

A similar situation arises for the so-called {\bf Mok embeddings}
$$\li_c:B\to\lO$$
of the unit ball $B=\Bl_n$ into a symmetric domain $\lO$ of higher rank, constructed in \cite{UWZ}. Here $c\in S_1$ is any rank 1 tripotent. These embeddings have the property that the respective Bergman kernels satisfy 
$$\KL_B(x,y)=\KL_\lO(\li_c(x),\li_c(y))$$
for all $x,y\in B.$ Let $B_c:=\li_c(B)\ic\lO$ be the image variety (whose defining equations are explicitly known \cite{UWZ}) and consider, for $g\in G,$ the subvariety $g(B_c)$ with associated Hilbert submodule $\wt\ML_\ln\subseteq\ML_\ln$ defined by a vanishing condition on $g(B_c).$

It would be of interest to study the reduced modules and rigidity problems for singular submodules in such a $G$-equivariant setting. 
\end{remark}

\begin{remark} Beyond the scalar case treated in this paper, analytic Hilbert modules for higher rank vector bundles ($n>1$) have recently attracted much attention \cite{KM1,KM2,MU1} and should give rise to interesting singular submodules as well.
\end{remark}

\end{document}